\documentclass[11pt]{amsart}
\usepackage{mathrsfs,latexsym,amsfonts,amssymb}
\usepackage{hyperref}
\hypersetup{colorlinks=true,
            linkcolor=cyan,
            urlcolor=red,
            citecolor=green,
            }
\newtheorem{thm}{Theorem}[section]

\newtheorem{lemma}[thm]{Lemma}

\newtheorem{defn}[thm]{Definition}
\newtheorem{cor}[thm]{Corollary}
\newtheorem{ques}[thm]{Question}
\newtheorem{prop}[thm]{Proposition}

\begin{document}

	\title[The separability embedding of $\sigma$-compact strongly topological gyrogroups]
	{The separability embedding of $\sigma$-compact strongly topological gyrogroups}
	
	\author{shumin Lai}
	\address{(shumin Lai): School of mathematics and statistics, Minnan Normal University, Zhangzhou 363000, P. R. China}
	\email{shuminlai2026@163.com}
	
	\author{Fucai Lin*}
	\address{(Fucai Lin): 1. School of mathematics and statistics, Minnan Normal University, Zhangzhou 363000, P. R. China; 2. Fujian Key Laboratory of Granular Computing and Application, Minnan Normal University, Zhangzhou 363000, P. R. China}
	\email{linfucai2008@aliyun.com; linfucai@mnnu.edu.cn}

\thanks{The authors are supported by Fujian Provincial Natural Science Foundation of China (No: 2024J02022) and the NSFC (No. 11571158).}
\thanks{* Corresponding author}
	
	\keywords{$\sigma$-compact; $\omega$-narrow; separable space; strongly topological gyrogroup; $\omega$-balanced.}
	\subjclass[2000]{22A15, 54D45, 54H11, 54H99}
	
\begin{abstract}
In this paper, it is shown that every right $\omega$-narrow strongly topological gyrogroup $G$ is right $\omega$-balanced by applying the gyrosemidirect product groups. Then we investigate the class of $\sigma$-compact strongly topological gyrogroups, and conclude that every $\sigma$-compact strongly topological gyrogroup is range-metrizable. By applying these results, we discuss the separability embedding of $\sigma$-compact strongly topological gyrogroups, and claim that the following three statements (a)-(c) are equivalent for any $\sigma$-compact strongly topological gyrogroup $G$:

\smallskip
(a) $G$ is homeomorphic to a subspace of a separable regular space;

\smallskip
(b) $G$ is topologically gyrogroup isomorphic to a subgyrogroup of a separable strongly topological gyrogroup;

\smallskip
(c) $G$ is topologically gyrogroup isomorphic to a closed subgyrogroup of a separable path-connected, locally path-connected strongly topological gyrogroup.

The above results extend the classical results from topological groups to the class of strongly topological gyrogroups in the literature.		
\end{abstract}
	
	\maketitle	
\section{Introduction and Preliminaries}
In this paper, all topological spaces are assumed to be Hausdorff. Moreover, the sets of the first infinite ordinal and positive integers are denoted by $\omega$ and $\mathbb{N}$ respectively. The weight $w(X)$ of a topological space $X$ is defined as the smallest cardinal number of the form $|\mathscr{B}|$, where $\mathscr{B}$ is a base of topology in $X$. Readers may refer to \cite{AT, key9, 1} for terminology and notations not
explicitly given here.

Gyrogroup, an algebraic structure, primarily originates from A. Ungar's research on Einstein's velocity addition and Thomas precession. Gyrogroup as a generalized form of group, inherits the core elements of a group, namely closure, identity element, and inverse element, and does not explicitly have associativity. In \cite{10}, W. Atiponrat introduced the concept of topological gyrogroups and extended relevant results from topological groups to topological gyrogroups by overcoming the lack of associativity.
A. Ungar investigated numerous properties of gyrogroups, such as the gyrotranslation theorem, the gyrosemidirect product group, etc. in \cite{1}, which serves as a powerful tool for the study of topological gyrogroup. In \cite{11}, J. Wattanapan et al. verified that every strongly topological gyrogroup can be embedded as a closed subgyrogroup of a path-connected and locally path-connected topological gyrogroup, then Y. Jin and L. Xie in \cite{7} extended the above result in topological gyrogroups.
A. Leiderman et al. in \cite{12} gave a complete characterization of subgroups of separable topological groups, and proved the following theorem.

\begin{thm}\label{t-3333}
Let $G$ be an $w$-narrow topological group $G$. Then the following three statements are equivalent:
\begin{enumerate}
\item $G$ is homeomorphic to a subspace of a separable regular space;
			
\item $G$ is topologically isomorphic to a subgroup of a separable topological group;
			
\item  $G$ is topologically isomorphic to a closed subgroup of a separable path-connected, locally path-connected topological group.
\end{enumerate}
\end{thm}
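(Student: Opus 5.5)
The implications (3)$\Rightarrow$(2)$\Rightarrow$(1) are immediate: a separable topological group is a Tychonoff space, so in particular it is regular, and closed subgroups are subgroups. All of the work goes into (1)$\Rightarrow$(3). My plan is to pass through weight and the standard path-connected hull construction.

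\emph{Step 1: bound the weight of $G$.} Assume $G\subseteq X$ with $X$ separable and regular. By the classical estimate $w(X)\le 2^{d(X)}$ for regular spaces, we get $w(X)\le\mathfrak{c}$, and therefore $w(G)\le\mathfrak{c}$.

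\emph{Step 2: pass to a group of the right type.} Since $G$ is $\omega$-narrow, Guran's theorem embeds $G$ as a subgroup of a product of second-countable groups. Having $w(G)\le\mathfrak{c}$ lets us use at most $\mathfrak{c}$ factors: choose $\mathfrak{c}$ continuous homomorphisms onto second-countable groups that generate the topology. The resulting embedding $G\hookrightarrow\prod_{\alpha<\mathfrak{c}}H_\alpha$ is a topological isomorphism onto its image, and by Hewitt--Marczewski--Pondiczery the product is separable. This already yields (2). One might worry that $\omega$-balancedness is needed to make the homomorphisms generate the topology, but $\omega$-narrow topological groups are $\omega$-balanced, so this is fine.

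\emph{Step 3: upgrade to path-connected, locally path-connected, with closed image.} Here I use the Hartman--Mycielski construction $\mathcal{H}(K)$, the group of step functions $[0,1)\to K$ with the metric-type topology. It embeds $K$ as a closed subgroup via constant functions, and it is path-connected and locally path-connected. I would apply it not to $G$ directly but to a separable group containing $G$ as a \emph{closed} subgroup. To get closedness, take the Raikov completion of each second-countable $H_\alpha$; the product of the completions is a complete group, so the closure $\overline{G}$ is a separable complete group.

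The problem is that $G$ need not be closed in $\overline{G}$. I would handle this the way Leiderman--Morris--Tkachenko do: represent $G$ as a closed subgroup of a separable group of weight at most $\mathfrak{c}$. One route is the embedding via free topological groups or Graev extensions over a suitable separable metrizable-like structure. Alternatively, I would check directly that $\mathcal{H}(K)$ is separable whenever $K$ is separable, using step functions with rational breakpoints and values in a countable dense set. Then $G$ is closed in $\mathcal{H}(G)$, and $\mathcal{H}(G)$ is separable provided $\mathcal{H}(G)$ embeds into the separable group $\mathcal{H}(\prod H_\alpha)$. The delicate point is that density of $G$ itself is not assumed. I expect the main obstacle to be producing a separable group containing $G$ as a closed subgroup. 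I would first try to realize $G$ as a closed subgroup of $\mathcal{H}(P)$, where $P$ is the product of $\mathfrak{c}$ second-countable groups, arguing closedness through a retraction-like map or through a careful choice of the ambient group.
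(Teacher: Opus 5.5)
Your Steps 1 and 2 are correct and follow the same route as the Leiderman--Morris--Tkachenko argument the paper relies on (adapted in Theorem~\ref{t-23}). You get $w(G)\le w(X)\le 2^{d(X)}\le\mathfrak{c}$, so $G$ embeds into a product $H=\prod_{\alpha<\mathfrak{c}}H_\alpha$ of second-countable groups (your $P$), which is separable by Hewitt--Marczewski--Pondiczery; this gives (1)$\Rightarrow$(2).

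The gap is Step 3. You reduce (3) to finding a separable group containing $G$ as a \emph{closed} subgroup, but that is the entire difficulty, and neither route you sketch achieves it. The closure $\overline{G}$ in a product of completions contains $G$ as a dense subgroup, so it can never make $G$ closed; nor is $\overline{G}$ automatically separable. The $\mathcal{H}(G)$ route fails outright. Being a subgroup of the separable group $\mathcal{H}(H)$ does not make $\mathcal{H}(G)$ separable, and in fact $\mathcal{H}(G)$ is separable only if $G$ is. To see this, let $D\subseteq\mathcal{H}(G)$ be countable and dense. For each $g\in G$ and each neighbourhood $U$ of $e$, the neighbourhood $g^\bullet O(U,1/2)$ meets $D$; this is the set of step functions $h$ with $g^{-1}h(x)\in U$ outside a set of measure $<1/2$. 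So some value of some $d\in D$ lies in $gU$, and hence the countable set of all values of members of $D$ is dense in $G$. This route therefore only covers the separable case you explicitly wanted to avoid.

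The missing idea is to stay inside $H^\bullet=\mathcal{H}(H)$ but pass to an intermediate subgroup. Let $E$ be the subgroup of step functions equal to $e$ on some final interval $[b,1)$. Let $G_0$ be the subgroup generated by $i(G)\cup E$, where $i(h)=h^\bullet$ is the constant function.

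\emph{Separability.} $E$ contains the countable set of step functions with rational breakpoints, values in a countable dense subset of $H$, and value $e$ on the last interval. This set is dense in $H^\bullet$, so $G_0$ is separable.

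\emph{Closedness.} We have $G_0\cap i(H)=i(G)$. A constant $f\in G_0$ is a word in finitely many $g_j^\bullet$ with $g_j\in G$ and $t_i\in E$. Evaluating at a point $b$ with all $t_i(b)=e$ shows that its constant value is the same word in the $g_j$, hence lies in $G$. Since $i(H)$ is closed in $H^\bullet$, it follows that $i(G)$ is closed in $G_0$.

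\emph{Path-connectedness.} $G_0$ is a proper dense subgroup of $H^\bullet$, so path-connectedness and local path-connectedness must be verified for $G_0$ itself rather than inherited. For constants, use the paths $r\mapsto f_r$ with $f_r=g$ on $[0,r)$ and $f_r=e$ on $[r,1)$; these lie in $E$ for $r<1$. For an element of $E$, use the path that grows it proportionally inside each interval of its partition. For local path-connectedness, apply the same deformation simultaneously to all letters of a word. Then $\Phi(r,x)\in\{f(x),e\}$ for all $r,x$, so the path stays in $O(U,\varepsilon)$.
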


Therefore, it is natural to pose the following question.

\begin{ques}\label{qq}
Whether can we generalize the class of topological groups in Theorem~\ref{t-3333} to the class of (strongly) topological gyrogroups?
\end{ques}

In this paper, we mainly consider Question~\ref{qq}, and give some partial answers to it. Next, we give some terminology and notations.

\begin{defn}[\cite{10}]\label{d-1.1}
Let $G$ be a nonempty set and let $\oplus:G \times G \rightarrow G$ be a binary operation on $G$. Then the pair $(G, \oplus)$ is called a {\color{blue} groupoid}. A function $f$ from a groupoid $(G_1, \oplus_1)$ to a groupoid $(G_2, \oplus _2)$ is said to be a {\color{blue} groupoid homomorphism} if $f(x \oplus _1 y) = f(x) \oplus _2 f(y)$ for any elements $x, y \in G_1$.  In addition, a bijection groupoid homomorphism from a groupoid $(G, \oplus)$ to itself is called a {\color{blue} groupoid automorphism}. We write $Aut(G, \oplus)$ for the set of all automorphisms of a groupoid $(G, \oplus)$.
\end{defn}

\begin{defn}[\cite{10}]\label{d-1.2}
Let $(G, \oplus)$ be a groupoid. The system $(G, \oplus)$ is called a {\color{blue} gyrogroup}, if its binary operation satisfies the following conditions:
\begin{itemize}
\item[(G1)] There exists a unique identity element $0 \in G$ such that $0 \oplus a = a = a \oplus 0$ for all $a \in G$.

\item[(G2)] For each $x \in G$, there exists a unique inverse element $\ominus x \in G$ such that $\ominus x \oplus x = 0 = x \oplus (\ominus x)$.

\item[(G3)] For all $x, y \in G$, there exists $\mathrm{gyr}[x,y] \in Aut(G, \oplus)$ with the property that $$x \oplus (y \oplus z) = (x \oplus y) \oplus \mathrm{gyr}[x,y](z)$$ for all $z \in G.$

\item[(G4)] For any $x, y \in G$, $\mathrm{gyr}[x \oplus y, y] = \mathrm{gyr}[x,y]$.
\end{itemize}
\end{defn}

\begin{defn}[\cite{10}]\label{d-1.3}
Let $(G, \oplus)$ be a gyrogroup. A nonempty subset $H$ of $G$ is called a {\color{blue} subgyrogroup}, denoted by $H \leq G$, if the following statements hold:

\smallskip
\indent\textup{(i)} The restriction $\oplus|_{H \times H}$ is a binary operation on $H$, i.e. $(H, \oplus|_{H \times H})$ is a groupoid.

\smallskip
\indent\textup{(ii)} For any $x, y \in H$, the restriction of $\mathrm{gyr}[x,y]$ to $H$, $\mathrm{gyr}[x,y]|_H:H \rightarrow \mathrm{gyr}[x,y](H)$, is a bijective homomorphism.

\smallskip
\indent\textup{(iii)} $(H, \oplus|_{H \times H})$ is a gyrogroup.

\smallskip
\indent Furthermore, a subgyrogroup $H$ of $G$ is said to be an {\color{blue} $L$-subgyrogroup} \cite{6}, denoted by $H \leq_L G$, if $\mathrm{gyr}[a,h](H) = H$ for all $a \in G$ and $h \in H$.
\end{defn}

\begin{defn}[\cite{6}]\label{d-1.4}
The gyrogroup cooperation $\boxplus$ is defined by the equation
$$x \boxplus y = x \oplus \mathrm{gyr}[x, \ominus y](y), x, y \in G.$$
\end{defn}

\begin{thm}[\cite{1}]\label{t-1.1}
Let $(G, \oplus)$ be a gyrogroup. Then, for any $x,y,z \in G$, the following equations hold.
\smallskip
\\\indent\textup{(1)} $(\ominus x) \oplus (x \oplus y) = y$.
\\\indent\textup{(2)} $(x \oplus (\ominus y)) \oplus \mathrm{gyr}[x, \ominus y](y) = x$.
\smallskip
\\\indent\textup{(3)} $(y \ominus x) \boxplus x = y$.
\smallskip
\\\indent\textup{(4)} $(y \boxminus x) \oplus x = y$.
\smallskip
\\\indent\textup{(5)} $\mathrm{gyr}[x,y](z) = \ominus(x \oplus y) \oplus (x \oplus (y \oplus z))$.
\end{thm}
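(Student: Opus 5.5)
The plan is to derive all five identities directly from the axioms (G1)--(G4), proving them in an order where each feeds the next. Two standard gyrogroup facts, themselves derived from the axioms, will be used along the way: the left cancellation law (1) itself, and the identities $\mathrm{gyr}[0,y]=\mathrm{gyr}[x,0]=\mathrm{id}$ and $\mathrm{gyr}[\ominus x,x]=\mathrm{id}$. The ingredients are the left gyroassociative law (G3), the left loop property (G4), uniqueness of identity and inverses, and the fact that each $\mathrm{gyr}[x,y]$ is an automorphism, so in particular it fixes $0$.

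For (1), apply (G3) with $a=\ominus x$, $b=x$, $c=y$:
\[
(\ominus x)\oplus(x\oplus y)=((\ominus x)\oplus x)\oplus \mathrm{gyr}[\ominus x,x](y)=\mathrm{gyr}[\ominus x,x](y).
\]
It therefore suffices to show $\mathrm{gyr}[\ominus x,x]=\mathrm{id}$. By (G4), $\mathrm{gyr}[\ominus x,x]=\mathrm{gyr}[(\ominus x)\oplus x,x]=\mathrm{gyr}[0,x]$. Next, (G3) gives $0\oplus(x\oplus z)=(0\oplus x)\oplus\mathrm{gyr}[0,x](z)$, which simplifies to $x\oplus z=x\oplus \mathrm{gyr}[0,x](z)$. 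Concluding $\mathrm{gyr}[0,x](z)=z$ needs left cancellation, so this step is circular. I expect this to be the main obstacle. The standard fix, following Ungar \cite{1}, is to first prove that a left identity and left inverse are two-sided and unique, and that $a\oplus b=a\oplus c\Rightarrow b=c$. To get cancellation, apply (G3) with $\ominus a$ on the left: $\ominus a\oplus(a\oplus b)=\mathrm{gyr}[\ominus a,a](b)$, and $\mathrm{gyr}[\ominus a,a]$ is a bijection, so $a\oplus b=a\oplus c$ implies $\mathrm{gyr}[\ominus a,a](b)=\mathrm{gyr}[\ominus a,a](c)$, hence $b=c$. With cancellation in hand, $\mathrm{gyr}[0,x]=\mathrm{id}$ follows, and then (1) follows.

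For (5), write $w=x\oplus(y\oplus z)=(x\oplus y)\oplus \mathrm{gyr}[x,y](z)$ by (G3), and apply (1) to cancel $x\oplus y$ on the left. For (2), use (G4) in the form $\mathrm{gyr}[x\ominus y,y]=\mathrm{gyr}[x,y]$, replacing $y$ by $\ominus y$ appropriately, together with (G3):
\[
(x\oplus(\ominus y))\oplus\mathrm{gyr}[x,\ominus y](y)=x\oplus((\ominus y)\oplus y)=x.
\]
This is just (G3) applied to the triple $(x,\ominus y,y)$.

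For (3) and (4), unfold Definition~\ref{d-1.4}. For (4), $y\boxminus x=y\boxplus(\ominus x)=y\oplus\mathrm{gyr}[y,x](\ominus x)$. Then
\[
(y\boxminus x)\oplus x=\bigl(y\oplus \mathrm{gyr}[y,x](\ominus x)\bigr)\oplus x,
\]
and I would rewrite this via (G3) and (G4) to obtain $y$. An alternative is to show $t=y\boxminus x$ is the unique solution of $t\oplus x=y$ using (2) with suitable substitutions. For (3), set $u=y\ominus x$. Then $u\boxplus x=u\oplus\mathrm{gyr}[u,\ominus x](x)$, and applying (2) with $(y,x)$ in place of $(x,y)$ gives $u\oplus \mathrm{gyr}[y,\ominus x](x)=y$. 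So the remaining step is $\mathrm{gyr}[y\ominus x,\ominus x]$ versus $\mathrm{gyr}[y,\ominus x]$, which is handled by the left loop property (G4) applied to the pair $(y\ominus x, x)$ after rewriting $y=(y\ominus x)\oplus \mathrm{gyr}[y,\ominus x](x)$. Checking the matching of gyration arguments here is the delicate bookkeeping point.
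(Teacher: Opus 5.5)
The paper gives no proof of this statement; it quotes the identities from Ungar's book, so your proposal has to stand on its own. Your treatment of (1), (2) and (5) is sound. Your cancellation argument derives $\ominus a\oplus(a\oplus b)=\mathrm{gyr}[\ominus a,a](b)$ from (G3) and then uses injectivity of $\mathrm{gyr}[\ominus a,a]$; this is not circular, and it is exactly Ungar's route. Since (G1)--(G2) as stated here already give a unique two-sided identity and inverses, nothing more of his preliminaries is needed. (The form $\mathrm{gyr}[x\ominus y,y]=\mathrm{gyr}[x,y]$ you quote under (2) is not (G4), but you never use it.)

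For (3) the bookkeeping is simpler than you suggest. Apply (G4) to the pair $(y,\ominus x)$: it gives $\mathrm{gyr}[y\ominus x,\ominus x]=\mathrm{gyr}[y,\ominus x]$ at once, and (2) finishes. Applying (G4) to $(y\ominus x,x)$, as you wrote, produces a gyration whose second argument is $x$, which is not the one you need.

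The genuine gap is (4). You do not carry out either of your two routes, and the second one cannot work as stated. Showing that every solution $t$ of $t\oplus x=y$ equals $y\boxminus x$ is only uniqueness, i.e.\ $(t\oplus x)\boxminus x=t$. That is just (3) with $x$ replaced by $\ominus x$, and it says nothing about whether $y\boxminus x$ actually is a solution, which is precisely the content of (4). Your first route fails for a similar reason. Put $c=\mathrm{gyr}[y,x](x)$, so that $y\boxminus x=y\ominus c$, using $\ominus(\ominus x)=x$ and $\mathrm{gyr}[y,x](\ominus x)=\ominus c$. A direct (G3)-rewrite of $(y\ominus c)\oplus x$ would require already knowing $\mathrm{gyr}[y,\ominus c](c)=x$, which is not evident.

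The missing idea is to obtain this identity by passing through $d=y\oplus x$:
\begin{enumerate}
\item (G3) for the triple $(y,x,\ominus x)$ gives $d\ominus c=y$.
\item (G4) for the pair $(d,\ominus c)$ then gives $\mathrm{gyr}[y,\ominus c]=\mathrm{gyr}[d,\ominus c]$.
\item So (2) for the pair $(d,c)$ reads $y\oplus\mathrm{gyr}[y,\ominus c](c)=d=y\oplus x$, and left cancellation yields $\mathrm{gyr}[y,\ominus c](c)=x$.
\item Finally (2) for the pair $(y,c)$ gives $(y\ominus c)\oplus x=y$, i.e.\ $(y\boxminus x)\oplus x=y$.
\end{enumerate}
Along the way you need $\mathrm{gyr}[y,x](\ominus x)=\ominus\mathrm{gyr}[y,x](x)$. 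This follows because an automorphism $g$ satisfies $g(0)=0$ (from $g(0)\oplus g(0)=g(0)\oplus 0$ and cancellation) and hence maps inverses to inverses.
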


\begin{defn}[\cite{10}]\label{d-1.5}
A triple $(G, \tau, \oplus)$ is called a {\color{blue} topological gyrogroup} if the following statements hold:

\smallskip
\indent\textup{(1)} $(G, \tau)$ is a topological space.

\smallskip
\indent\textup{(2)} $(G, \oplus )$ is a gyrogroup.

\smallskip
\indent\textup{(3)} The binary operation $\oplus: G \times G \rightarrow G$ is jointly continuous when $G \times G$ is endowed with the product topology, and the operation of taking the inverse $\ominus(\cdot): G \rightarrow G$, i.e. $x \rightarrow \ominus x$, is also continuous.
\end{defn}

\begin{defn}(\cite{BL}) Let $G$ be a topological gyrogroup. We say that $G$ is a {\color{blue} strongly topological gyrogroup} if there exists a neighborhood base $\mu$ of 0 such that for all $U \in \mu$, $gyr[x, y](U) = U$ for any $x, y \in G$. For convenience, we say that $G$ is a {\it strongly topological gyrogroup with a neighborhood base $\mu$ of 0}. Moreover, we may assume that each element of $\mu$ is symmetric.
\end{defn}	

 A well-known example of a strongly topological gyrogroup, which is not a topological group, is $M\ddot{o}bius$ topological gyrogroup, see \cite{BL}.

\begin{defn}[\cite{13}]\label{d-1.6}
A topological gyrogroup $G$ is called {\color{blue} left (right) $w$-narrow}, if for every open neighborhood $V$ of the identity element $0$ in $G$, there exists a countable subset $A$ of $G$ such that $G = A \oplus V (G = V \oplus A)$. If $G$ is left $w$-narrow and right $w$-narrow, then $G$ is {\color{blue} $w$-narrow}.
\end{defn}

\begin{defn}[\cite{2}]\label{d-1.7}
For a topological gyrogroup $G$ and $A \subseteq G$, the set $A$ is said to be {\color{blue} left (right) invariant} if $(x \oplus A) \oplus (\ominus x) = A$ $((x \oplus (A \oplus (\ominus x))) = A)$, for each $x \in G$. The set $A$ is {\color{blue} invariant} if it is not only left invariant but also right invariant.

The {\color{blue} left (right) invariance number} of $G$ is countable, denoted by $linv(G) \leq \omega$ ($rinv(G) \leq \omega$), if for each open neighborhood $U$ of $0$, we can find a countable family $\gamma$ of open neighborhoods of $0$ such that for each $x \in G$, $(x \oplus V) \oplus (\ominus x) \subseteq U$ ($x \oplus (V \oplus (\ominus x)) \subseteq U$) for some $V \in \gamma$. We say that $G$ has {\color{blue} countable invariance number} if $linv(G) \leq \omega$ and $rinv(G) \leq \omega$ both hold. The topological gyrogroup $G$ with $linv(G) \leq \omega$ ($rinv(G) \leq \omega$) is also called  {\color{blue} left $\omega$-balanced (right $\omega$-balanced)}. We call $G$  {\color{blue} $\omega$-balanced} if it is left $\omega$-balanced and right $\omega$-balanced.
\end{defn}

\begin{defn}[\cite{1}]\label{d-1.8}
Let $G = (G, \oplus)$ be a gyrogroup, and let $Aut(G) = Aut(G, \oplus)$ be the \textit{automorphism group} of $G$. A \textit{gyroautomorphism group}, $Aut_0(G)$, of $G$ is any subgroup of $Aut(G)$ containing all the gyroautomorphisms $\mathrm{gyr}[a,b]$ of $G$, $a, b \in G$.

The \textit{gyrosemidirect product group}
$G \times Aut_0(G)$
of a gyrogroup $G$ and any gyroautomorphism group, $Aut_0(G)$, is a group of pairs $(x, X)$, where $x \in G$ and $X \in Aut_0(G)$, with operation given by the  {\color{blue} gyrosemidirect product}
$(x, X)(y, Y) = (x \oplus Xy, \mathrm{gyr}[x, Xy]XY)$.
In analogy with the notion of the semidirect product in group theory, the gyrosemidirect product group
$G \times Aut(G)$ is called the  {\color{blue} gyroholomorph} of $G$.
\end{defn}

\begin{defn}\cite{BL}
Let $G$ be a gyrogroup, with the identity element $e$, and let $N$ be a real-valued function on $G$. Then $N$ is called a {\color{blue} prenorm} on $G$ if the following conditions hold for all $x, y\in G$:

\smallskip
\indent\textup{(PN1)} $N(e)=0$;

\smallskip
\indent\textup{(PN2)} $N(X\oplus y)\leq N(x)+N(y)$;

\smallskip
\indent\textup{(PN3)} $N(\ominus x)=N(x)$.
\end{defn}

\section{Some lemmas}
In this section, we mainly give some technical lemmas in order to prove our main theorems.

First, we consider the following question which was posed by M. Bao in \cite[Question 4.12]{2}. Indeed, this question appears naturally since every $w$-narrow topological group is $w$-balanced.

{\bf Question \cite[Question 4.12]{2}:} Must every $w$-narrow topological gyrogroup $G$ be $w$-balanced? Additionally, does this property hold if $G$ is assumed to be a strongly topological gyrogroup?

The first theorem, which is proved by by using the concept of gyrosemidirect product groups, gives a partial answer to the above question.

\begin{thm}\label{t-2}
Every right $\omega$-narrow strongly topological gyrogroup $G$ is right $\omega$-balanced.
\end{thm}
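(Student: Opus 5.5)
The plan is to imitate the classical argument for $\omega$-narrow topological groups: cover $G$ by countably many translates of a small neighbourhood, and control conjugation at each of the countably many centres by continuity. The gyrations are absorbed using the invariant neighbourhood base $\mu$ of a strongly topological gyrogroup. To keep the gyroassociative bookkeeping under control, all computations are transported to the gyrosemidirect product group $G\times Aut_0(G)$ of Definition~\ref{d-1.8}. There $(x,I)(y,I)=(x\oplus y,\mathrm{gyr}[x,y])$ and $(x,I)^{-1}=(\ominus x,I)$, so the first coordinate of a group product reproduces the gyrogroup expression together with the gyrations that accompany it.

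\emph{Setup.} Fix an open neighbourhood $U$ of $0$. Choose a symmetric $V\in\mu$ with $(V\oplus V)\oplus V\subseteq U$, which is possible by continuity of $\oplus$. By right $\omega$-narrowness there is a countable $A\subseteq G$ with $G=V\oplus A$. For each $a\in A$, the map $z\mapsto a\oplus(z\ominus a)$ is continuous and sends $0$ to $0$. Hence we can choose $W_a\in\mu$ with $a\oplus(W_a\ominus a)\subseteq V$. Put $\gamma=\{W_a:a\in A\}$, which is countable.

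\emph{Main estimate.} Given $x\in G$, write $x=v\oplus a$ with $v\in V$, $a\in A$, and take $w\in W_a$. The goal is to show $x\oplus(w\ominus x)\in U$. Two identities will be used:
\begin{itemize}
\item the gyro-inverse law $\ominus(v\oplus a)=\mathrm{gyr}[v,a](\ominus a\ominus v)$;
\item the left gyroassociative law (G3) together with Theorem~\ref{t-1.1}(5), applied inside $G\times Aut_0(G)$, to regroup $(v\oplus a)\oplus(w\ominus(v\oplus a))$ into the form
$$\bigl(v\oplus (a\oplus(w'\ominus a))\bigr)\oplus v',$$
where $w'=\mathrm{gyr}[\cdot,\cdot](w)$ and $v'=\mathrm{gyr}[\cdot,\cdot](\ominus v)$ are gyrations of $w$ and $\ominus v$ respectively.
\end{itemize}
Since $W_a,V\in\mu$ are invariant under every gyration and $V$ is symmetric, we get $w'\in W_a$ and $v'\in V$. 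Then $a\oplus(w'\ominus a)\in V$, and the whole expression lies in $V\oplus V\oplus V$ for a suitable bracketing. If the regrouping produces a bracketing different from $(V\oplus V)\oplus V$, I will instead choose $V$ so that every bracketing of a triple sum lands in $U$; this is harmless, again by continuity and the invariance of $\mu$. Hence $x\oplus(W_a\ominus x)\subseteq U$, which is exactly the right $\omega$-balanced condition.

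\emph{Main obstacle.} The delicate step is the regrouping. Associativity fails, so one has to verify that every gyration produced when expanding $(v\oplus a)\oplus\bigl(w\oplus \mathrm{gyr}[v,a](\ominus a\ominus v)\bigr)$ acts only on elements that already lie in sets from $\mu$. I would carry this out first in the holomorph. There the product $(v,I)(a,I)(w,I)(a,I)^{-1}(v,I)^{-1}$ is honestly associative. The task is then to compare its first coordinate with $x\oplus(w\ominus x)$: the two should differ only by gyrations applied to the inner factors. The gyration-invariance of the members of $\mu$ is then what makes the estimate go through.
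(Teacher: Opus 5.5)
Your proposal is correct and is essentially the paper's proof. It uses the same $V$ with $(V\oplus V)\oplus V\subseteq U$, the same countable family $\{W_a\}$ from right $\omega$-narrowness, and the same computation in the gyrosemidirect product group. The paper makes your ``differ only by gyrations'' step precise by writing $(x,I)=(v,I)(a,I)(0,\mathrm{gyr}[v,a])^{-1}$ and using $(0,X)^{-1}(w,I)(0,X)=(X^{-1}w,I)$. This yields $x\oplus(w\ominus x)=(v\oplus s)\oplus Z(\ominus v)$ with $s=a\oplus(X^{-1}w\ominus a)\in V$, where $Z$ is a composition of several gyrations rather than a single one. That difference is harmless, because each member of $\mu$ is invariant under every gyration and hence under compositions of gyrations.
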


\begin{proof}
Suppose that $G$ is a right $\omega$-narrow strongly topological gyrogroup with a symmetric neighborhood base $\mathscr{U}$ at $0$ satisfying $\mathrm{gyr}[x, y](U) = U$ for each $U \in \mathscr{U}$ and any $x,  y \in G$. To verify that $G$ is right $\omega$-balanced, it suffices to show that for each $U \in \mathscr{U}$, there exists a countable family $\gamma$ of open neighborhoods of $0$ such that for each $x \in G$, there exists $O \in \gamma$ satisfying $x \oplus (O \oplus (\ominus x) )\subseteq U$.

Take any $U \in \mathscr{U}$. Then there exists $V \in \mathscr{U}$ such that $(V \oplus V )\oplus V \subseteq U$.
Since $G$ is right $\omega$-narrow, there exists a countable subset $A = \{a_n : n \in \mathbb{N}\} \subseteq G$ such that
$$G = V \oplus A = \bigcup_{n \in \mathbb{N}} (V \oplus a_n).$$
For each $a_n \in A$, there exists $U_n \in \mathscr{U}$ such that
$a_n \oplus (U_n \oplus (\ominus a_n)) \subseteq V.$
Put $\gamma = \{U_n : n \in \mathbb{N}\}$. Clearly, $\gamma$ is a countable family. We claim that for each $x \in G$, there exists $n\in\mathbb{N}$ such that $x \oplus (U_{n} \oplus (\ominus x) )\subseteq U$.

Indeed, fix any $x \in G$. Then there exist $n \in \mathbb{N}$ and $v \in V$ such that $x = v \oplus a_n$.
Put $\Phi=x \oplus ( U_n \oplus (\ominus x))$. We shall prove that $\Phi\subset U$.
Let $P=G \times Aut_0(G)$ be the gyrosemidirect product group. Clearly, we have
$$(v, I)(a_n, I) = (v \oplus a_n, \mathrm{gyr}[v, a_n]) = (v \oplus a_n, I) \cdot (0, \mathrm{gyr}[v, a_n]),$$
hence it follows that $$(x, I) = (v \oplus a_n, I) = (v, I)(a_n, I)(0, \mathrm{gyr}[v, a_n])^{-1}.$$
Then $(x, I)^{-1} = (0, \mathrm{gyr}[v, a_n]) (a_n, I)^{-1} (v, I)^{-1}$.

Put $\hat{\Phi}=(x, I) \cdot \widehat{U_n} \cdot (x, I)^{-1}$ in $P$, where $\widehat{U_n} = \{ (w, I) \mid w \in U_n \}$. Indeed, $$\hat{\Phi} = \left\{ \Big( x \oplus (w \oplus (\ominus x)), \quad gyr[x \oplus w, gyr[x, w](\ominus x)] \circ gyr[x, w] \Big) \mathrel{\bigg|} w \in U_n \right\}.$$
Then
$$\hat{\Phi} = \underbrace{(v, I)(a_n, I)(0, X)^{-1}}_{\hat{x}} \cdot \widehat{U_n} \cdot \underbrace{(0, X)(a_n, I)^{-1}(v, I)^{-1}}_{\hat{x}^{-1}},$$ where $X = \mathrm{gyr}[v, a_n]$.
For any $w\in U_{n}$, by using the associativity of the gyrosemidirect product group $P$, we conclude that
\begin{align*}
	    		(0, X)^{-1} (w, I) (0, X)&=(0, X^{-1}) (w, I) (0, X)\\
                &=(X^{-1}(w), X^{-1}) (0, X)\\
                &=(X^{-1}(w), X^{-1}X)\\
	    		&=(X^{-1}(w), I).
	    	\end{align*}
Since $G$ is a strongly topological gyrogroup and $U_n \in \mathscr{U}$, we have $X^{-1}(U_n) = U_n$, hence
$$(0, X)^{-1} \cdot \widehat{U_n} \cdot (0, X) = \{ (X^{-1}(w), I) \mid w \in U_n \} = \{ (w, I) \mid w \in U_n \} = \widehat{U_n}.$$
Now we have $$ (a_n, I) (w, I) (a_n, I)^{-1} = (a_n \oplus w, \mathrm{gyr}[a_n, w])\cdot (\ominus a_n, I),$$
where $w \in U_n$. Put $u=a_n\oplus w$ and $Y=\mathrm{gyr}[a_n, w]$.
Then $$(a_n, I) (w, I) (a_n, I)^{-1} = (u, Y) (\ominus a_n, I)= (u \oplus Y(\ominus a_n), \quad \mathrm{gyr}[u, Y(\ominus a_n)] \circ Y \circ I).$$
Therefore, we get an ordered pair $(s, K)$, where
$$s = (a_n \oplus w) \oplus \mathrm{gyr}[a_n, w](\ominus a_n)=a_n \oplus (w \oplus (\ominus a_n)) \in V$$ and $K = \mathrm{gyr}[u, Y(\ominus a_n)] \circ Y$.
Therefore, we have that $$ (v, I) (s, K) (v, I)^{-1} = (v, I) (s, K) (\ominus v, I) = (v \oplus s, \mathrm{gyr}[v, s] \circ K) (\ominus v, I).$$
Put $Z=\mathrm{gyr}[v, s] \circ K$.
Then  $$(v, I) (s, K) (v, I)^{-1} = (v \oplus s, Z) (\ominus v, I) = ( (v \oplus s) \oplus Z(\ominus v), \mathrm{gyr}[v \oplus s, Z(\ominus v)]Z).$$
Now it is easy to see that
\begin{equation}
(v \oplus s) \oplus Z(\ominus v) \in (V \oplus V) \oplus V \subseteq U.
\tag{$*$}\label{eq:star}
\end{equation}
From ($*$), it follows that for each $w \in U_n$, we have $x \oplus (w \oplus (\ominus x))\in (V \oplus V) \oplus V \subseteq U$. Hence we have $x\oplus (U_n \oplus (\ominus x) )\subseteq U$.
Therefore, $G$ is right $\omega$-balanced. The proof is completed.
\end{proof}

However, the following question is still unknown for us.

\begin{ques}
Is every left $\omega$-narrow strongly topological gyrogroup $G$ left $\omega$-balanced?
\end{ques}

The following lemma was proved in \cite{BL}, which plays an important role in this paper.

\begin{lemma} \cite[Lemma 3.12]{BL}\label{t-3}
Let $G$ be a strongly topological gyrogroup with the symmetric neighborhood base $\mathscr{U}$ at $0$, and let $\{U_n : n \in \omega\}$ and $\{V(m/2^n) : n, m \in \omega\}$ be two sequences of open neighborhoods satisfying the following conditions (1)-(5):
\begin{enumerate}
    \item[(1)] $U_n \in \mathscr{U}$ for each $n \in \omega$.
    \item[(2)] $U_{n+1} \oplus U_{n+1} \subseteq U_n$, for each $n \in \omega$.
    \item[(3)] $V(1) = U_0$;
    \item[(4)] For any $n \ge 1$, put
    $$ V(1/2^n) = U_n, \ V(2m/2^n) = V(m/2^{n-1}) $$
    for $m = 1, \dots, 2^{n}$, and
    $$ V((2m + 1)/2^n) = U_n \oplus V(m/2^{n-1}) = V(1/2^n) \oplus V(m/2^{n-1}) $$
    for each $m = 1, \dots, 2^{n} - 1$;
    \item[(5)] $V(m/2^n) = G$ when $m > 2^n$.
\end{enumerate}
Then there exists a prenorm $N$ on $G$ satisfies the following conditions:
\begin{enumerate}
    \item[(a)] for any fixed $x, y \in G$, we have $N(\mathrm{gyr}[x, y](z)) = N(z)$ for any $z \in G$;
    \item[(b)] for any $n \in \omega$,
    $$ \{x \in G : N(x) < 1/2^n\} \subseteq  U_n \subseteq \{x \in G : N(x) \le 2/2^n\}. $$
\end{enumerate}
\end{lemma}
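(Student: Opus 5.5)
This is the Urysohn-type construction familiar from topological groups: define $N$ through the dyadic family $V(r)$ and then check the required properties. The plan is to put, for $x\in G$,
$$N(x)=\inf\{r : r \text{ dyadic rational},\ x\in V(r)\},$$
with the convention that $V(r)=G$ for $r>1$, so that $0\le N\le 1$.

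The first step is to record the basic features of the family $\{V(r)\}$.
\begin{itemize}
\item[(i)] Each $V(r)$ is a finite left-nested $\oplus$-combination of the sets $U_k$. Since every $U_k\in\mathscr U$ is gyr-invariant and gyroautomorphisms are automorphisms, each $V(r)$ is gyr-invariant.
\item[(ii)] $0\in V(r)$ for every $r$, and hence $r<r'$ implies $V(r)\subseteq V(r')$. This is proved by induction on $n$, using $0\in U_n$ and the fact that $V((2m+1)/2^n)=U_n\oplus V(m/2^{n-1})$ contains $V(m/2^{n-1})$.
\item[(iii)] The key subadditivity relation is
$$V(1/2^n)\oplus V(r)\subseteq V(r+1/2^n)$$
for every dyadic $r$ with denominator $2^n$. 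It is proved by induction on $n$, exactly as in the group case. For $r=2m/2^n$ it is the defining relation of $V((2m+1)/2^n)$. For $r=(2m+1)/2^n$ one writes
$$U_n\oplus(U_n\oplus V(m/2^{n-1})) = (U_n\oplus U_n)\oplus \mathrm{gyr}[\cdot,\cdot]V(m/2^{n-1}).$$
This equation uses (G3) together with the gyr-invariance from (i). Then $U_n\oplus U_n\subseteq U_{n-1}$ and the induction hypothesis finish this case.
\end{itemize}

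With (iii) in hand, the next step is to prove the general inclusion $V(r)\oplus V(s)\subseteq V(r+s)$. Equivalently, I would prove (PN2), $N(x\oplus y)\le N(x)+N(y)$, by a dyadic-digit induction. Write the smaller dyadic summand as a sum of powers $1/2^k$, and peel them off one at a time. Each peeling step needs a reassociation of $x\oplus(y\oplus z)$, and this is done using (G3) and the invariance of the $V$'s under gyrations. Some care is needed because $\oplus$ is not associative. I expect this bookkeeping to be the main obstacle. Two points make it go through: every $V(r)$ has the form $U_{k_1}\oplus(U_{k_2}\oplus(\cdots))$, and gyrations preserve all the pieces. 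Together these mean that "associativity up to gyration" costs nothing at the level of sets.

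The remaining properties are then short. (PN1) holds because $0\in V(1/2^n)$ for all $n$. For (PN3) I would use the symmetry of the $U_n$, together with $\ominus(a\oplus b)=\mathrm{gyr}[a,b](\ominus b\ominus a)$. This identity shows that $\ominus V(r)$ is contained in a reversed combination, and the reversed combination is controlled by the same estimate. Should this be awkward, one can instead symmetrize by setting $N'(x)=\max\{N(x),N(\ominus x)\}$; this keeps (PN2) and (b), though one must check that it does not spoil the constants. Property (a) follows from (i), since $x\in V(r)$ if and only if $\mathrm{gyr}[a,b](x)\in V(r)$. Finally, for (b): $U_n=V(1/2^n)$ gives $N\le 1/2^n\le 2/2^n$ on $U_n$. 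Conversely, $N(x)<1/2^n$ means that $x\in V(r)$ for some $r<1/2^n$, and by (ii) this gives $x\in V(1/2^n)=U_n$.
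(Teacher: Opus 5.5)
Your steps (i)--(iii) are correct. With $f(x)=\inf\{r: x\in V(r)\}$ you also get (a) and both inclusions in (b). The proof breaks at the step you called bookkeeping: the inclusion $V(r)\oplus V(s)\subseteq V(r+s)$ is false for general dyadic $r,s$, already for discrete groups (trivial gyrations). So $f$ itself need not be a prenorm.

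Unwinding (4), $V(t)=U_{k_1}\oplus(U_{k_2}\oplus(\cdots\oplus U_{k_j}))$, where $k_1>k_2>\dots>k_j$ are the binary digits of $t$. The finest piece therefore always stands on the left, and (iii) only lets you add $1/2^n$ on the left of an $r$ with no digit finer than $2^{-n}$. For $r=1/2$, $s=1/4$ you would need $U_1\oplus U_2\subseteq U_2\oplus U_1$. That is a commutativity requirement, not an associativity one, so gyr-invariance cannot supply it.

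Concretely, take the discrete free group on $a,b$, with $\mathscr U$ the family of all symmetric sets containing $e$. Put $U_0=G$, $U_1=\{e,a^{\pm1},a^{\pm2},b^{\pm1}\}$, $U_2=\{e,a^{\pm1}\}$ and $U_n=\{e\}$ for $n\ge 3$. These satisfy (1)--(2), and every $V(t)$ with $t<1$ lies in $U_2U_1$. Moreover $ba\notin U_2U_1$, because $a^{-i}ba\notin U_1$ for $i\in\{0,\pm1\}$. Hence $f(ba)=1>3/4=f(b)+f(a)$, so (PN2) fails. Also $f(a^{-1}b^{-1})\le 3/4<f(ba)$, so (PN3) fails. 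Your fallback $\max\{f(x),f(\ominus x)\}$ does not help: it still takes the values $1$, $1/2$, $1/4$ at $ba$, $b$, $a$.

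The missing idea is the Kakutani-type averaging used for groups in Arhangel'skii--Tkachenko's Lemma 3.3.10, whose statement this lemma mirrors. The paper itself gives no proof and only quotes the result. Keep $f$ and put $N(x)=\sup_{y\in G}|f(x\oplus y)-f(y)|$.

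\begin{enumerate}
\item[(PN1)] This is immediate.
\item[(PN3)] Substitute $y=x\oplus w$ in the definition of $N(\ominus x)$ and use $\ominus x\oplus(x\oplus w)=w$.
\item[(PN2)] Write $z=\mathrm{gyr}[x,y](w)$, a bijection in $w$. Then (G3) gives $(x\oplus y)\oplus z=x\oplus(y\oplus w)$, while $f(z)=f(w)$. Apply the triangle inequality through $f(y\oplus w)$.
\item[(a)] This follows from the gyr-invariance of $f$ and the fact that gyrations are automorphisms.
\item[(b)] The bound $N(x)\ge|f(x\oplus 0)-f(0)|=f(x)$ gives the first inclusion. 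For the second, let $x\in U_n$ and $y\in G$; the case $f(y)=1$ is trivial. Otherwise choose $m$ with $(m-1)/2^n\le f(y)<m/2^n$. Then $y\in V(m/2^n)$, so (iii) gives $f(x\oplus y)\le (m+1)/2^n\le f(y)+2/2^n$. Since $\ominus x\in U_n$ and $y=\ominus x\oplus(x\oplus y)$, you also get $f(y)\le f(x\oplus y)+2/2^n$. Thus $N(x)\le 2/2^n$.
\end{enumerate}

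This rounding step is exactly where the constant $2/2^n$ in (b) comes from.
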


\begin{lemma}\label{t-4}
Let $G$ be a right $\omega$-balanced topological gyrogroup, and let $\gamma$ be a countable family of open neighborhoods of the identity element $0 \in G$. Then there exists a countable family $\gamma^*$ of open neighborhoods of $0$ with the following properties:
\begin{enumerate}
\item[(1)] $\gamma \subseteq \gamma^*$;

\item[(2)] the intersection of any finite subfamily of $\gamma^*$ belongs to $\gamma^*$;

\item[(3)] for each $U \in \gamma^*$, there exists $V \in \gamma^*$ such that $V = \ominus V$ and $V \oplus V \subseteq U$;

\item[(4)] for each $U \in \gamma^*$ and $a \in G$, there exists $V \in \gamma^*$ such that $ a \oplus (V \oplus(\ominus a)) \subseteq U$.
\end{enumerate}
\end{lemma}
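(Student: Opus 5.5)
The plan is a standard countable closure argument. We build an increasing sequence $\gamma_0\subseteq\gamma_1\subseteq\cdots$ of countable families of open neighborhoods of $0$ and put $\gamma^*=\bigcup_{k\in\omega}\gamma_k$. We start with $\gamma_0=\gamma$. Given a countable $\gamma_k$, we define $\gamma_{k+1}$ by adding three kinds of sets to $\gamma_k$:

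\begin{enumerate}
\item[(i)] All intersections of finite subfamilies of $\gamma_k$. There are countably many, and each is an open neighborhood of $0$.
\item[(ii)] For each $U\in\gamma_k$, one symmetric open neighborhood $V_U$ of $0$ with $V_U\oplus V_U\subseteq U$. By joint continuity of $\oplus$ at $(0,0)$ there is an open $W\ni 0$ with $W\oplus W\subseteq U$. Since $\ominus(\cdot)$ is a continuous involution ($\ominus(\ominus x)=x$), it is a homeomorphism, so $\ominus W$ is open. Put $V_U=W\cap(\ominus W)$; then $V_U=\ominus V_U$ and $V_U\oplus V_U\subseteq W\oplus W\subseteq U$.
\item[(iii)] For each $U\in\gamma_k$, a countable family $\lambda_U$ of open neighborhoods of $0$ witnessing $rinv(G)\le\omega$ for $U$ (Definition~\ref{d-1.7}). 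Thus for every $x\in G$ there is $O\in\lambda_U$ with $x\oplus(O\oplus(\ominus x))\subseteq U$.
\end{enumerate}

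Since $\gamma_k$ is countable, $\gamma_{k+1}$ is a countable union of countable families and is therefore countable. Hence $\gamma^*$ is countable.

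Now we verify the four properties.
\begin{enumerate}
\item[(1)] This is immediate, since $\gamma=\gamma_0\subseteq\gamma^*$.
\item[(2)] A finite subfamily of $\gamma^*$ lies in some single $\gamma_k$, because the $\gamma_k$ increase. Its intersection was added in $\gamma_{k+1}\subseteq\gamma^*$.
\item[(3)] If $U\in\gamma_k$, then $V_U\in\gamma_{k+1}$ is symmetric and satisfies $V_U\oplus V_U\subseteq U$.
\item[(4)] If $U\in\gamma_k$ and $a\in G$, choose $O\in\lambda_U\subseteq\gamma_{k+1}$ with $a\oplus(O\oplus(\ominus a))\subseteq U$.
\end{enumerate}

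I do not expect a genuine obstacle. The one point needing care is (4), where uncountably many $a\in G$ must be handled by countably many sets. This is exactly what right $\omega$-balancedness supplies: the family $\lambda_U$ depends only on $U$, not on $a$. The form $a\oplus(V\oplus(\ominus a))$ in (4) matches the right invariance condition of Definition~\ref{d-1.7} exactly, so no gyrogroup identities are needed.
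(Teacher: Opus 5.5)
Your proposal is correct and takes essentially the same approach as the paper. The paper also iterates an operator $\phi$ that adjoins finite intersections, a symmetric $V_U$ with $V_U\oplus V_U\subseteq U$, and a countable family witnessing right $\omega$-balancedness for each $U$, and then takes $\gamma^*=\bigcup_n\gamma_n$. Your version differs only in being more explicit: you make the chain increasing by construction, justify the existence of the symmetric $V_U$, and verify (1)--(4), all of which the paper leaves to the reader.
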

\begin{proof}
For each $U \in \gamma$, fix an open neighborhood  $V_U$ of $0$ such that $V_U = \ominus V_U$ and $V_U \oplus V_U \subseteq U$.
Since $G$ is right $\omega$-balanced, we can find a countable family $\mathcal{V}_U$ of open neighborhoods of $0$ subordinated to $U$, such that for each $x \in G$ there exists $W_U \in \mathcal{V}_U$ with $ x \oplus (W_U \oplus (\ominus x)) \subseteq U$.

Now let $$ \phi(\gamma) = \left\{ \bigcap \lambda : \lambda \subseteq \gamma, |\lambda| < \omega \right\} \cup \bigcup \{ \mathcal{V}_U : U \in \gamma \} \cup \{ V_U : U \in \gamma \}.$$

We put $\gamma_0 = \gamma$, $\gamma_1 = \phi(\gamma_0)$, and repeat this operation, defining by induction, for each $n \in \mathbb{N}$, we have $\gamma_{n+1} = \phi(\gamma_n)$. Let $\gamma^* = \bigcup_{n \in \mathbb{N}} \gamma_n$; then $\gamma^*$ satisfies conditions (1)-(4).
\end{proof}

\begin{lemma} \label{t-5}
Let $G$ be a right $\omega$-balanced topological gyrogroup, and $U$ be an open neighborhood of the identity element $0$ in $G$. Then there exists a sequence $\{U_n : n \in \omega \}$ of open neighborhoods of $0$ such that, for each $n \in \omega$, the following conditions are satisfied:
\\\indent\textup{(i)} $U_0 \subseteq U$;
\\\indent\textup{(ii)} $U_n = \ominus U_n$;
\\\indent\textup{(iii)}	$U_{n+1} \oplus U_{n+1} \subseteq U_n$; and
\\\indent\textup{(iv)} for each $x \in G$ and each $n \in \omega$, there is $k \in \omega$ such that $x \oplus (U_k \oplus (\ominus x)) \subseteq U_n$.
\end{lemma}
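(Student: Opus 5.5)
We derive this from Lemma~\ref{t-4}, applied to the one-element family $\gamma=\{U\}$. Lemma~\ref{t-4} gives a countable family $\gamma^*$ of open neighborhoods of $0$ with $U\in\gamma^*$, closed under finite intersections, and satisfying the symmetric-halving property~(3) and the conjugation property~(4). Enumerate $\gamma^*=\{W_k:k\in\omega\}$, repeating elements if $\gamma^*$ happens to be finite. The plan is to build the sequence $\{U_n\}$ inside $\gamma^*$ recursively, diagonalizing so that every member of $\gamma^*$ eventually has some $U_n$ below it.

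\textbf{Step 1 (the recursion).} Using~(3), choose $U_0\in\gamma^*$ with $U_0=\ominus U_0$ and $U_0\oplus U_0\subseteq U$. This gives (i) and the symmetry of $U_0$. Given a symmetric $U_n\in\gamma^*$, form $U_n\cap W_n$, which lies in $\gamma^*$ by~(2). Apply~(3) to it to obtain a symmetric $U_{n+1}\in\gamma^*$ with
\[
U_{n+1}\oplus U_{n+1}\subseteq U_n\cap W_n .
\]
This gives (ii) and (iii) at once. It also gives $U_{n+1}\subseteq U_{n+1}\oplus U_{n+1}\subseteq W_n$, because $0\in U_{n+1}$ and $v=v\oplus 0$. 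So for every $k$ we have $U_{k+1}\subseteq W_k$, and the sequence is cofinal (downward) in $\gamma^*$.

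\textbf{Step 2 (condition (iv)).} Fix $x\in G$ and $n\in\omega$. Since $U_n\in\gamma^*$, property~(4) of Lemma~\ref{t-4} gives $V\in\gamma^*$ with $x\oplus(V\oplus(\ominus x))\subseteq U_n$. Write $V=W_j$. By Step 1, $U_{j+1}\subseteq W_j=V$, and therefore
\[
x\oplus(U_{j+1}\oplus(\ominus x))\subseteq x\oplus(V\oplus(\ominus x))\subseteq U_n .
\]
Thus $k=j+1$ works.

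\textbf{The main obstacle.} The essential point is making (iv) work for \emph{all} $x\in G$ using only countably many neighborhoods. This is exactly where right $\omega$-balancedness is used, and it is already absorbed into Lemma~\ref{t-4}(4). Lemma~\ref{t-4}(4) is stated pointwise in $a$, while the proof of that lemma takes the countable families $\mathcal V_U$ witnessing $\omega$-balancedness uniformly; either way, (4) is all that Step 2 needs. What remains is to check two small things. First, the monotonicity step $U_{j+1}\subseteq V$ implies $x\oplus(U_{j+1}\oplus(\ominus x))\subseteq x\oplus(V\oplus(\ominus x))$; this holds because the map $y\mapsto x\oplus(y\oplus(\ominus x))$ is applied elementwise. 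Second, $V\subseteq V\oplus V$ holds, which follows from $0\in V$ and the identity axiom (G1). Neither step uses associativity, so no gyration issues arise.
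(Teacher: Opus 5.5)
Your proof is correct and follows the paper's argument. You apply Lemma~\ref{t-4} to $\gamma=\{U\}$, enumerate $\gamma^*=\{W_k:k\in\omega\}$, and recursively choose symmetric $U_{n+1}\in\gamma^*$ with $U_{n+1}\oplus U_{n+1}$ contained in $U_n$ intersected with members of $\gamma^*$, so that every $W_j$ contains some $U_k$ and property~(4) then yields~(iv). The only difference is cosmetic: the paper intersects with $\bigcap_{i\le n}W_i$ and uses $U_{\max\{n,j\}+1}$ as the witness, whereas you intersect only with $W_n$ and take $k=j+1$, which works equally well.
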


\begin{proof}
Put $\gamma = \{U\}$, and take a countable family $\gamma^*$ of open neighborhoods of $0$ satisfying conditions (1)-(4) of Lemma~\ref{t-4}. Then $U \in \gamma^*$.
We are going to define, by induction, a sequence of elements of $\gamma^*$.

Let us first enumerate the elements of $\gamma^*$, say, $\gamma^* = \{W_n : n \in \omega\}$.
Since $U \in \gamma^*$, $W_0 \in \gamma^*$, and $\gamma^*$ satisfies conditions (2) and (3) of Lemma~\ref{t-4}, it follows that $U \cap W_0 \in \gamma^*$ and there exists $U_0 \in \gamma^*$ such that $U_0 \subseteq U \cap W_0$.
Now assume that, for some $n \in \omega$, an element $U_n \in \gamma^*$ has already been defined.

Then since $\gamma^*$ satisfies conditions (2) and (3), we can choose an element $V \in \gamma^*$ such that $V = \ominus V$ and $V \oplus V \subseteq U_n \cap \bigcap_{i=0}^n W_i$. Put $U_{n+1} = V$. The definition is complete. Thus, the sequence $\{U_n : n \in \omega\}$ satisfies conditions (i)-(iii).

Fix $n \in \omega$ and $x \in G$. Since the family $\gamma^*$ satisfies condition (4) of Lemma~\ref{t-4}, there exists $j \in \omega$ such that $x \oplus (W_j \oplus (\ominus x)) \subseteq U_n$. Put $k = \max\{n, j\}$. From the definition of $U_{k+1}$, it follows that $U_{k+1} \subseteq U_{k+1} \oplus U_{k+1} \subseteq W_j$. Thus,
$$ x \oplus (U_{k+1} \oplus (\ominus x)) \subseteq x \oplus (W_j \oplus (\ominus x)) \subseteq U_n. $$
Thus the proof is completed.
\end{proof}

The following two lemmas were proved in \cite{3} and~\cite{4} respectively.

\begin{lemma}\cite{3}\label{t-6}
Let $(G,\mathscr{T},\oplus)$ be a topological gyrogroup, and let $H \unlhd G$. Then $(G/H,\mathfrak{T},\oplus)$ is a topological quotient gyrogroup, where $\mathfrak{T}$ is the quotient topology.
\end{lemma}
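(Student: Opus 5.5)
This lemma is cited from \cite{3}, so the plan is to reconstruct the standard argument for quotient topological groups, adapting it to gyrogroups. Here $H \unlhd G$ means that $H$ is a normal subgyrogroup, i.e. the kernel of a gyrogroup homomorphism. The algebraic part comes first: the cosets $a\oplus H$ partition $G$, and the operation $(a\oplus H)\oplus(b\oplus H)=(a\oplus b)\oplus H$ is well defined. This holds because normality makes $G/H$ the image of a homomorphism $\varphi$ with kernel $H$, so $a\oplus H=\varphi^{-1}(\varphi(a))$. The gyrogroup axioms (G1)--(G4) on $G/H$ then transfer from $G$ through the natural surjection $\pi:G\to G/H$, with $\mathrm{gyr}[\pi(a),\pi(b)]\pi(c)=\pi(\mathrm{gyr}[a,b]c)$. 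The identity in Theorem~\ref{t-1.1}(5) shows this automorphism is well defined on cosets.

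The topological part begins by showing that $\pi$ is an open map. For open $U\subseteq G$, we have $\pi^{-1}(\pi(U))=U\oplus H=\bigcup_{h\in H}U\oplus h$. Each right translation $x\mapsto x\oplus h$ is a homeomorphism of $G$, with inverse $x\mapsto x\boxminus h$ by Theorem~\ref{t-1.1}(4). Hence this union is open, and so $\pi(U)$ is open in the quotient topology.

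Continuity of inversion on $G/H$ follows from $\ominus\pi(a)=\pi(\ominus a)$. Given an open $W\subseteq G/H$, the preimage of $W$ under inversion has $\pi$-preimage $\ominus\pi^{-1}(W)$, which is open. For the operation, fix $a,b$ and an open neighborhood $W$ of $\pi(a\oplus b)$. By continuity of $\oplus$ in $G$, there are open sets $U\ni a$ and $V\ni b$ with $U\oplus V\subseteq\pi^{-1}(W)$. Then $\pi(U)\oplus\pi(V)=\pi(U\oplus V)\subseteq W$, and $\pi(U)$, $\pi(V)$ are open neighborhoods by openness of $\pi$. This gives joint continuity. Because $\pi$ is open, the product map $\pi\times\pi$ is also open, so no issue arises from comparing the quotient topology with the product topology.

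I expect the main obstacle to be the algebraic well-definedness. Without associativity, proving that $a\oplus H\oplus b\oplus H$ depends only on the cosets requires the gyro-identities together with the fact that $H$ is a kernel. Here I would rely on $H$ being normal, so that the operation is the one pulled back from $\varphi(G)$. A secondary concern is the standing Hausdorff assumption: if it is needed, it would require $H$ to be closed, and I would treat this as a hypothesis implicitly carried by \cite{3} rather than prove it.
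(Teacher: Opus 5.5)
Your proof is correct. The paper gives no proof of its own here; the lemma is simply quoted from \cite{3}, and your argument is the standard one. You identify the cosets with the fibres of a homomorphism with kernel $H$, which gives the gyrogroup structure and makes $\pi$ a homomorphism. You get openness of $\pi$ from $\pi^{-1}(\pi(U))=\bigcup_{h\in H}(U\oplus h)$, and then push continuity of $\ominus$ and $\oplus$ through the open surjection $\pi$.

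The one step you assert rather than check is that right translation $x\mapsto x\oplus h$ is a homeomorphism. Continuity of the inverse follows from
\[
x\boxminus h=x\oplus\mathrm{gyr}[x,h](\ominus h)=x\oplus\bigl(\ominus(x\oplus h)\oplus x\bigr),
\]
which comes from Theorem~\ref{t-1.1}(5). Injectivity comes from $(x\oplus h)\boxminus h=x$, a direct consequence of (G3) and (G4).

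Your caveat about the Hausdorff property is the right one. $G/H$ is $T_1$, and hence regular, exactly when $H$ is closed. That holds in the paper's only use of the lemma, Theorem~\ref{t-11}, where $Z=\bigcap_{n}\overline{V_{n+1}}$ is closed.
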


\begin{lemma}\cite{4}\label{t-7}
Let $(G, \tau, \oplus)$ be a topological gyrogroup. If $U$ is an open neighborhood of $0$ and $F$ is a compact subset of $G$, then there exists an open neighborhood $V$ of $0$ such that $(a \oplus V) \oplus (b \oplus V) \subseteq (a \oplus b) \oplus U$ for every $a, b \in F$.
\end{lemma}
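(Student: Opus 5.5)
The plan is a compactness argument applied to a single continuous "error" map. I would define
$$f\colon G^4\to G,\qquad f(a,b,x,y)=\ominus(a\oplus b)\oplus\big((a\oplus x)\oplus(b\oplus y)\big).$$
By Definition~\ref{d-1.5}, $\oplus$ is jointly continuous and $\ominus$ is continuous, so $f$ is continuous as a composition of continuous maps. Moreover, $f(a,b,0,0)=\ominus(a\oplus b)\oplus(a\oplus b)=0$ for all $a,b\in G$.

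The goal is to find an open neighborhood $V$ of $0$ with $f(F\times F\times V\times V)\subseteq U$. This suffices because of the left cancellation law $z\oplus(\ominus z\oplus w)=w$. That law follows from Theorem~\ref{t-1.1}(1) applied to $\ominus z$, together with $\ominus(\ominus z)=z$, which comes from the uniqueness of inverses in (G2). Taking $z=a\oplus b$ gives
$$(a\oplus x)\oplus(b\oplus y)=(a\oplus b)\oplus f(a,b,x,y)\in(a\oplus b)\oplus U$$
whenever $a,b\in F$ and $x,y\in V$.

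To produce $V$, fix $(a,b)\in F\times F$. Since $f$ is continuous at $(a,b,0,0)$ and $f(a,b,0,0)=0\in U$, there are open sets $O_{a},O_{b}\ni a,b$ and an open neighborhood $V_{a,b}$ of $0$ with $f(O_a\times O_b\times V_{a,b}\times V_{a,b})\subseteq U$. Here a basic product neighborhood is shrunk so that both last coordinates use the same set $V_{a,b}$. The sets $O_a\times O_b$ cover $F\times F$, which is compact, so finitely many of them, indexed by $(a_i,b_i)$ for $i\le k$, already cover it. I then put $V=\bigcap_{i\le k}V_{a_i,b_i}$, which is an open neighborhood of $0$. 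For any $a,b\in F$, the pair $(a,b)$ lies in some $O_{a_i}\times O_{b_i}$, and hence $f(a,b,x,y)\in U$ for all $x,y\in V$.

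I do not expect a serious obstacle. The only delicate point is to avoid using associativity, and this is done by packaging everything into the map $f$ and recovering $(a\oplus x)\oplus(b\oplus y)$ through left cancellation, which holds in every gyrogroup.
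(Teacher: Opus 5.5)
Your proof is correct and complete: the error map $f(a,b,x,y)=\ominus(a\oplus b)\oplus\bigl((a\oplus x)\oplus(b\oplus y)\bigr)$ is continuous and vanishes at every point $(a,b,0,0)$. Compactness of $F\times F$ then gives a single $V$, and left cancellation $z\oplus(\ominus z\oplus w)=w$ turns $f(a,b,x,y)\in U$ into the required inclusion without any appeal to associativity, to the strong-gyrogroup hypothesis, or to separation axioms.

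The paper gives no proof of this lemma; it only quotes it from \cite{4}. Your argument is the same local-continuity-plus-finite-subcover pattern the paper itself uses in proving Lemma~\ref{t-8}.

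One notational point: the open sets $O_a$, $O_b$ depend on the pair $(a,b)$, not on $a$ or $b$ alone, so they are better indexed as $O_{a,b}$ and $O'_{a,b}$.
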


\begin{lemma}\label{t-8}
Let $(G, \tau, \oplus)$ be a strongly topological gyrogroup with a symmetric neighborhood base $\mu$ at $0$. If $U \in \mu$ and $H$ is a compact subset of $G$, then there exists $V \in \mu$ such that $(h \oplus V) \boxplus (\ominus h) \subseteq U$, $(\ominus h) \oplus (V \oplus h) \subseteq U$  and $h\oplus (V \boxplus (\ominus h))\subseteq U$ for every $h \in H$.
\end{lemma}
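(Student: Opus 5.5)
The plan is to prove the three inclusions separately and take the smallest of the three resulting neighborhoods. Each inclusion follows from a compactness argument using continuity of the relevant two-variable map on $H\times G$, together with the invariance $\mathrm{gyr}[x,y](U)=U$ for $U\in\mu$.

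\textbf{First inclusion.} Consider the map $f_1\colon G\times G\to G$ given by $f_1(h,v)=(h\oplus v)\boxplus(\ominus h)$. The cooperation satisfies $x\boxplus y=x\oplus \mathrm{gyr}[x,\ominus y](y)$, and by Theorem~\ref{t-1.1}(5) the gyrator is expressed through $\oplus$ and $\ominus$. Hence $\boxplus$ is continuous, and so is $f_1$. Moreover $f_1(h,0)=h\boxplus(\ominus h)=0$ by Theorem~\ref{t-1.1}(3) with $x=h$, $y=0$: indeed $(0\ominus h)\boxplus h=0$, and applying the same identity symmetrically gives $h\boxplus(\ominus h)=0$.

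For each $h\in H$, continuity of $f_1$ at $(h,0)$ gives an open $O_h\ni h$ and a $V_h\in\mu$ with $f_1(O_h\times V_h)\subseteq U$. Cover $H$ by finitely many $O_{h_1},\dots,O_{h_k}$ and take $V'\in\mu$ with $V'\subseteq\bigcap_i V_{h_i}$. This is the standard tube-lemma argument.

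\textbf{Second and third inclusions.} The same argument applies to $f_2(h,v)=(\ominus h)\oplus(v\oplus h)$ and $f_3(h,v)=h\oplus(v\boxplus(\ominus h))$. We need $f_2(h,0)=(\ominus h)\oplus h=0$, and $f_3(h,0)=h\oplus(0\boxplus(\ominus h))=h\oplus(\ominus h)=0$. The latter uses $0\boxplus y=0\oplus\mathrm{gyr}[0,\ominus y](y)=y$, since gyrations with a $0$ entry are trivial. This yields $V'',V'''\in\mu$.

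\textbf{Conclusion.} Finally take $V\in\mu$ contained in $V'\cap V''\cap V'''$; this is possible because $\mu$ is a base at $0$. Then all three inclusions hold for every $h\in H$.

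\textbf{Main obstacle.} The only delicate point is justifying the identities $h\boxplus(\ominus h)=0$ and continuity of $\boxplus$, since nonassociativity prevents direct group manipulation. I would verify $h\boxplus(\ominus h)=h\oplus\mathrm{gyr}[h,h](\ominus h)$ and use $\mathrm{gyr}[h,h]=I$, which follows from (G4) and the left loop property. The strong-gyrogroup hypothesis is not essential beyond ensuring that the chosen $V$ can be taken in $\mu$. If one wanted a more "algebraic" proof, an alternative is to use Lemma~\ref{t-7} with $F=H\cup(\ominus H)$, but the compactness/tube argument above is simpler.
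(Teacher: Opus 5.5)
Your proposal is correct and follows essentially the paper's route. The paper cites an external result for the first two inclusions and proves the third by the same compactness argument (continuity at $(h,0,h)$, then covering $H$ by finitely many $h_k\oplus V_{h_k}$), whereas you run the tube-lemma argument uniformly for all three maps. Your final step of choosing $V\in\mu$ inside $V'\cap V''\cap V'''$ also settles a point the paper glosses over, since a finite intersection of members of $\mu$ need not itself lie in $\mu$.
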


\begin{proof}
Take any open neighborhood $U$ of $0$.  By \cite{4}, there exists $V \in \mu$ such that $(h \oplus V) \boxplus (\ominus h) \subseteq U$ and $(\ominus h) \oplus (V \oplus h) \subseteq U$ for every $h \in H$. Hence it suffices to prove that there exists $V \in \mu$ such that $h\oplus (V \boxplus (\ominus h))\subseteq U$ for every $h \in H$. Indeed, for each $h\in H$, it follows from the definition of the operation ``$\boxplus$'' that there exists $V_{h}\in\mu$ such that $(h\oplus V_{h})\oplus (V_{h}\boxplus(\ominus(h\oplus V_{h})))\subseteq U$. Since $H$ is compact and $H\subseteq\bigcup_{h\in H}(h\oplus V_{h})$, there is a finite subset $\{h_{1},...,h_{n}\}\subset H$ such that $H\subseteq\bigcup_{k=1}^{n}(h_{k}\oplus V_{h_{k}})$. Put
\[V=\bigcap_{k=1}^{n}V_{h_{k}}.\]
    Clearly, $V$ is an open neighborhood of $0$ in $G$. For any $h\in H$, there exists $1\leq k\leq n$ such that $h\in h_{k}\oplus V_{h_{k}}$, hence
    \begin{align*}
h\oplus ( V \boxplus(\ominus h)) & \subseteq(h_{k}\oplus V_{h_{k}})\oplus (V\boxplus(\ominus(h_{k}\oplus V_{h_{k}}))) \\
        & \subseteq(h_{k}\oplus V_{h_{k}})\oplus (V_{h_{k}}\boxplus(\ominus(h_{k}\oplus V_{h_{k}}))) \\
        & \subseteq U.
    \end{align*}
\end{proof}

Finally, we give an important lemma, which provides a criterion to determine whether a subgyrogroup is an $L$-subgyrogroup.

\begin{lemma}\cite{5}\label{t-10}
Let $H$ be a subgyrogroup of a gyrogroup $G$. Then $H \trianglelefteq G$ if and only if the operation on the coset space $G/H$ given by $$(a \oplus H) \oplus (b \oplus H) = (a \oplus b) \oplus H$$ is well defined.
\end{lemma}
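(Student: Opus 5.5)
We read $H \trianglelefteq G$ as: $H$ is a \emph{normal} subgyrogroup, i.e.\ $H=\ker\varphi$ for some gyrogroup homomorphism $\varphi\colon G\to K$. The plan is to prove the two implications separately, identifying cosets with fibres of $\varphi$ in one direction and constructing the quotient gyrogroup in the other. We use one standard fact not recorded in the paper: the left cosets $a\oplus H$ of a subgyrogroup partition $G$, so that $a\oplus H=a'\oplus H$ precisely when $a'\in a\oplus H$.

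\emph{Necessity.} Let $H=\ker\varphi$.
\begin{itemize}
\item First we show $a\oplus H=\{x\in G:\varphi(x)=\varphi(a)\}$. Homomorphisms preserve $0$ and $\ominus$. By Theorem~\ref{t-1.1}(1), $x=a\oplus(\ominus a\oplus x)$, and
$$\varphi(\ominus a\oplus x)=\ominus\varphi(a)\oplus\varphi(x).$$
By left cancellation in $K$, this is $0$ exactly when $\varphi(x)=\varphi(a)$.
\item So cosets are the fibres of $\varphi$. If $a\oplus H=a'\oplus H$ and $b\oplus H=b'\oplus H$, then
$$\varphi(a'\oplus b')=\varphi(a')\oplus\varphi(b')=\varphi(a)\oplus\varphi(b)=\varphi(a\oplus b).$$
Hence $(a'\oplus b')\oplus H=(a\oplus b)\oplus H$, and the coset operation is well defined.
\end{itemize}

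\emph{Sufficiency.} Assume the coset operation is well defined. We show that $G/H$ becomes a gyrogroup and that $\pi\colon a\mapsto a\oplus H$ is a homomorphism with kernel $H$.
\begin{itemize}
\item By construction $\pi$ is a surjective groupoid homomorphism.
\item Axiom (G1) holds with identity $H=0\oplus H$.
\item For (G2), take $\ominus(a\oplus H):=\ominus a\oplus H$. This is well defined: it is the unique coset $C$ with $C\oplus(a\oplus H)=H$. Uniqueness follows by applying $\pi$ to the identity $(\ominus a)\oplus(a\oplus y)=y$ of Theorem~\ref{t-1.1}(1), which gives left cancellation in $G/H$ by the class of $\ominus a$.
\end{itemize}

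\emph{Gyrations on the quotient (main obstacle).} We must check that
$$\mathrm{gyr}[a\oplus H,b\oplus H](c\oplus H):=\mathrm{gyr}[a,b](c)\oplus H$$
is independent of the chosen representatives. We would use Theorem~\ref{t-1.1}(5): $\pi(\mathrm{gyr}[a,b](c))$ equals
$$\ominus\big((a\oplus H)\oplus(b\oplus H)\big)\oplus\Big((a\oplus H)\oplus\big((b\oplus H)\oplus(c\oplus H)\big)\Big).$$
This expression involves only the well-defined quotient operation and quotient inverse, so it depends only on the cosets.

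It remains to verify the remaining axioms.
\begin{itemize}
\item The map just defined is a groupoid homomorphism, because $\mathrm{gyr}[a,b]$ is one and $\pi$ is surjective.
\item It is bijective, with inverse induced by $\mathrm{gyr}[b,a]=\mathrm{gyr}[a,b]^{-1}$; the induced map is again well defined by the same formula argument.
\item The identities (G3) and (G4) in $G$ push forward through the surjective homomorphism $\pi$.
\end{itemize}
Thus $G/H$ is a gyrogroup and $\pi$ is a homomorphism. Finally, $\ker\pi=\{a\in G:a\oplus H=H\}=H$, using the partition fact for the last equality. Hence $H$ is normal.
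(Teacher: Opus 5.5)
The paper gives no proof of this lemma; it is quoted from \cite{5}, so there is no in-paper argument to compare with. Your argument is the natural one and is essentially correct. For necessity you identify cosets with fibres of $\varphi$. For sufficiency you push the axioms through the surjective groupoid homomorphism $\pi\colon a\mapsto a\oplus H$, using Theorem~\ref{t-1.1}(5) to make the induced gyrations independent of representatives. Two points need repair.

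\textbf{The partition claim.} The ``standard fact'' you invoke is false for a general subgyrogroup. By the right gyroassociative law, $(x\oplus h)\oplus H=x\oplus\bigl(h\oplus\mathrm{gyr}[h,x](H)\bigr)$. So the cosets $x\oplus H$ and $(x\oplus h)\oplus H$ both contain $x\oplus h$, yet they differ whenever $\mathrm{gyr}[h,x](H)\neq H$. An example is $H=(-1,1)$ in the M\"{o}bius disk with $h\neq 0$ and $x\notin\mathbb{R}$, where $\mathrm{gyr}[h,x]$ is a rotation by an angle different from $0$ and $\pi$. Left cosets partition $G$ only under an invariance condition such as the $L$-subgyrogroup condition of Definition~\ref{d-1.3}.

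Fortunately the proof never needs the general claim. The only instance you use is $a\oplus H=H\iff a\in H$, and this holds for every subgyrogroup:
\begin{itemize}
\item if $a\oplus H=H$, then $a=a\oplus 0\in H$;
\item if $a\in H$, then $a\oplus H\subseteq H$, and each $h\in H$ equals $a\oplus(\ominus a\oplus h)$ with $\ominus a\oplus h\in H$.
\end{itemize}
Delete the general claim and insert this short verification.

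\textbf{The side of cancellation in (G2).} The identity $\pi(\ominus a)\oplus(\pi(a)\oplus Y)=Y$ makes $L_{\pi(a)}$ injective on $G/H$. That gives uniqueness of $C$ with $(a\oplus H)\oplus C=H$, not with $C\oplus(a\oplus H)=H$. The latter would need right cancellation in $G/H$, which is not available before the gyrogroup structure exists. The right-sided version is exactly what you need. For every representative $a$ we have $\pi(a)\oplus\pi(\ominus a)=\pi(0)$, so it shows that $\ominus a\oplus H$ depends only on $a\oplus H$; just swap the side.

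With these two fixes the argument goes through for arbitrary subgyrogroups, as the lemma is stated.
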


\section{The range-metrizability of $\sigma$-compact strongly topological gyrogroups}
In this section, we first prove that each $\sigma$-compact strongly  topological gyrogroup is range-metrizable. Then, by applying this result, we show that each $\sigma$-compact strongly topological gyrogroup $G$ is topologically isomorphic to a subgyrogroup of the topological product of some family of second-countable strongly topological gyrogroups.

\begin{thm}\label{t-11}
Let $G$ be a $\sigma$-compact strongly topological gyrogroup. For each open neighborhood $U$ of the identity element $0$ in $G$, there exists a continuous homomorphism $\pi$ of $G$ onto a  metrizable strongly topological gyrogroup $H$ such that $\pi^{-1}(\widetilde{V}) \subseteq U$, for some open neighborhood $\widetilde{V}$ of the identity element $e_H$ of $H$.
\end{thm}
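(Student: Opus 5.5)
We follow the classical argument for $\sigma$-compact topological groups, replacing the group structure by the strong gyrogroup structure.

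\emph{Step 1: reduction to right $\omega$-balanced.} Write $G=\bigcup_{n\in\omega}K_n$ with each $K_n$ compact. For every open neighborhood $W$ of $0$, the open cover $\{W\oplus x: x\in G\}$ of $K_n$ has a finite subcover; we use that right translations $y\mapsto y\oplus x$ are homeomorphisms, which holds by Theorem~\ref{t-1.1}(3),(4). Hence $G$ is right $\omega$-narrow, and Theorem~\ref{t-2} shows that $G$ is right $\omega$-balanced.

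\emph{Step 2: choice of neighborhoods.} Shrink $U$ to some $U'\in\mu$ with $U'\subseteq U$, where $\mu$ is the gyration-invariant symmetric base. We build a sequence $\{U_n:n\in\omega\}\subseteq\mu$ satisfying (i)--(iv) of Lemma~\ref{t-5}, with $U_0\subseteq U'$. To do so, rerun the proofs of Lemmas~\ref{t-4} and~\ref{t-5}, always replacing a chosen neighborhood by a smaller member of $\mu$. This keeps every chosen set gyration-invariant, as Lemma~\ref{t-3} requires. In addition, at stage $n$ we use Lemma~\ref{t-8} with the compact set $K_0\cup\dots\cup K_n$ to make $U_{n+1}$ small enough that $h\oplus(U_{n+1}\boxplus(\ominus h))\subseteq U_n$ and $(\ominus h)\oplus(U_{n+1}\oplus h)\subseteq U_n$ for all $h\in K_0\cup\dots\cup K_n$. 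Countability is preserved because only countably many choices are made. Now apply Lemma~\ref{t-3} to obtain a gyration-invariant prenorm $N$ with $\{N<2^{-n}\}\subseteq U_n\subseteq\{N\le 2^{1-n}\}$.

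\emph{Step 3: the kernel and the quotient.} Put $K=\bigcap_n U_n=\{x:N(x)=0\}$. By (ii)--(iii), $K$ is a closed subgyrogroup. It is invariant under all gyrations because each $U_n$ is. We must show that $K$ is normal, and by Lemma~\ref{t-10} it suffices to check that $(a\oplus K)\oplus(b\oplus K)=(a\oplus b)\oplus K$. This follows from condition (iv): for every $x$ and $n$ there is $k$ with $x\oplus(U_k\oplus(\ominus x))\subseteq U_n$, so the conjugates $x\oplus(K\oplus(\ominus x))$ lie in $K$. Combine this with gyration invariance and the gyrogroup identities of Theorem~\ref{t-1.1}. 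Let $H=G/K$ with the quotient topology and $\pi$ the quotient map; by Lemma~\ref{t-6}, $H$ is a topological gyrogroup and $\pi$ is a continuous open homomorphism. The sets $\pi(U_n)$ form a base at $e_H$ and are gyration-invariant, since gyrations of $H$ are induced from those of $G$. Hence $H$ is strongly topological.

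\emph{Step 4: metrizability and conclusion.} The base $\{\pi(U_n)\}$ at $e_H$ is countable. $H$ is Hausdorff because $K$ is closed, so it is first countable and Hausdorff. By the known metrization theorem for first-countable Hausdorff (strongly) topological gyrogroups, $H$ is metrizable. Alternatively, define $d(\pi x,\pi y)=N(\ominus x\oplus y)$ directly; this is well defined thanks to the normality of $K$ and the gyration invariance of $N$. Finally take $\widetilde V=\pi(U_1)$. Since $U_1\oplus K\subseteq U_1\oplus U_1\subseteq U_0$, we get $\pi^{-1}(\widetilde V)=U_1\oplus K\subseteq U_0\subseteq U$.

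The main obstacle is Step 3: proving that $K$ is normal (an $L$-subgyrogroup) from the right conjugation condition (iv) despite non-associativity. This is where the compact-set control from Lemma~\ref{t-8} and the gyration invariance from the strong structure must be combined carefully.
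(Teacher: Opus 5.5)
Your proposal has a genuine gap exactly at the step you flag, and a second one in Step 4.

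\emph{Normality of $K$.} Step 2 imposes gyration invariance, the conjugation-type controls of Lemma~\ref{t-8}, and right $\omega$-balancedness. These give $\mathrm{gyr}[x,y](K)=K$ and $K\oplus y\subseteq y\oplus K$ for every $y$, and hence $a\oplus(b\oplus K)=(a\oplus b)\oplus K$. They do not give right compatibility. By the right gyroassociative law, $(a\oplus k)\oplus b=a\oplus(k\oplus\mathrm{gyr}[k,a]b)$. So $(a\oplus k)\oplus b\in(a\oplus b)\oplus K$ holds iff $\mathrm{gyr}[k,a](b)\in b\oplus K$, i.e.\ gyrations with one argument in $K$ must act trivially modulo $K$. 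This is automatic only for groups, and nothing in your proposal proves it; ``combine with gyration invariance and Theorem~\ref{t-1.1}'' is not an argument.

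The paper gets normality from a different compact-set estimate. Via Lemma~\ref{t-7} it also demands $(x\oplus V_{n+1})\oplus(y\oplus V_{n+1})\subseteq(x\oplus y)\oplus V_n$ for all $x,y\in F_n$. Since any $x,y$ lie in $F_n$ for all large $n$, intersecting yields $(x\oplus Z)\oplus(y\oplus Z)\subseteq(x\oplus y)\oplus Z$. The reverse inclusion comes from $(x\oplus y)\oplus Z=x\oplus(y\oplus\mathrm{gyr}[y,x]Z)$, and Lemma~\ref{t-10} finishes. (Your Step 1 is unnecessary. The condition $x\oplus(U_k\oplus(\ominus x))\subseteq U_n$ for every $x$ already follows from the estimate $(\ominus h)\oplus(U_{n+1}\oplus h)\subseteq U_n$ applied with $h=\ominus x$, once $\ominus x$ lies in some $K_m$.)

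\emph{Metrizability.} With the quotient topology, $\{\pi(U_n)\}$ is not in general a local base at $e_H$. The $U_n$ are merely a countable family with intersection $K$, not a base at $0$ in $G$, so $\pi(W)$ for an arbitrary neighbourhood $W$ of $0$ need not contain any $\pi(U_n)$. For example, let $G$ be a $\sigma$-compact topological group of countable pseudocharacter that is not first countable, such as the free abelian topological group on a convergent sequence. Your construction allows $K=\{0\}$, and then $G/K=G$ is not metrizable.

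One has to put on $G/K$ the coarser topology of $d(\pi x,\pi y)=N(\ominus x\oplus y)$. Then one must prove that $(G/K,d)$ is a strongly topological gyrogroup and that $\pi$ is continuous. This is the bulk of the paper's proof (Claims 1--6 and the nine-condition verification), whereas your ``alternatively'' sentence only checks that $d$ is well defined. Continuity of $\oplus$ for $d$ again involves $\mathrm{gyr}[a,x]y$, through $(x\oplus a)\oplus y=x\oplus(a\oplus\mathrm{gyr}[a,x]y)$, so it too calls for the Lemma~\ref{t-7}-type control rather than conjugation estimates alone. Finally, $\widetilde V$ should be a metric ball such as $\{\pi x: N(x)<1/2\}$. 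Its preimage $\{N<1/2\}$ lies in $U_1\subseteq U$.
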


\begin{proof}
Suppose that $G$ is a strongly topological gyrogroup with a symmetric neighborhood base $\mu$ at $0$ satisfying $\mathrm{gyr}[x,y](W) = W$ for each $W \in \mu$ and each $x, y \in G$. Let $U$ be an arbitrary open neighborhood of $0$ in $G$. Then there exists $V_0 \in \mu$ such that $V_0 \oplus V_0 \subseteq U$.

Since $G$ is $\sigma$-compact, we can assume $G = \bigcup_{n \in \omega}F_n$, where each $F_n$ is a compact subset of $G$, $0 \in F_0$ and $F_n \subseteq F_{n+1}$ for each $n \in \omega$.

By Lemmas~\ref{t-7} and~\ref{t-8}, there exists a subfamily $\{V_n: n \in \omega \} \subseteq \mu$ such that, for each $n \in \omega$, the following conditions hold:

\indent\textup{(i)} $V_{n+1} \oplus V_{n+1} \subseteq V_n \cap U$;

\indent\textup{(ii)} $\mathrm{gyr}[x,y](V_n) = V_n$ for each $x,y \in G$;

\indent\textup{(iii)} $(\ominus x) \oplus (V_{n+1} \oplus x) \subseteq V_n$ for each $x \in F_n$;

\indent\textup{(iv)} $(x \oplus V_{n+1}) \oplus (y \oplus V_{n+1}) \subseteq (x \oplus y) \oplus V_n$ for each $x,y \in F_n$;

\indent\textup{(v)}  $x \oplus ( V_{n+1} \boxplus (\ominus x)) \subseteq V_n$ for each $x \in F_n$.

Clearly, $\overline{V_{n+1}} \subseteq V_{n+1} \oplus V_{n+1} \subseteq V_n \cap U$ for each $n \in \omega$. Thus, $$Z = \bigcap_{n \in \omega}V_n = \bigcap_{n \in \omega}\overline{V_{n+1}}$$ is closed in $G$. It is obvious that $Z$ is a subgyrogroup of $G$.

Next we prove that $Z$ is a normal subgyrogroup of $G$. By Lemma~\ref{t-10}, it suffices to verify that $(x \oplus Z) \oplus (y \oplus Z) = (x \oplus y) \oplus Z$ for each $x, y \in G$. Now fix any $x, y \in G$. Then $$(x \oplus y) \oplus Z \subseteq x \oplus (y \oplus \mathrm{gyr}[y,x](Z)) = x \oplus (y \oplus Z)$$ by (ii), hence $(x \oplus y) \oplus Z \subseteq (x \oplus Z) \oplus (y \oplus Z)$. Moreover, by(iv), we have:
\begin{align*}
(x \oplus Z) \oplus (y \oplus Z) &= \left( \bigcap_{n\in\omega}(x \oplus V_n) \right) \oplus \left( \bigcap_{n\in\omega}(y \oplus V_n) \right) \\
&\subseteq \bigcap_{n\in\omega} ((x \oplus V_{n+1}) \oplus (y \oplus V_{n+1})) \\
&\subseteq \bigcap_{n\in\omega} ((x \oplus y) \oplus V_n) \\
&= (x \oplus y) \oplus Z.
\end{align*}
By the arbitrary choices of $x, y \in G$, the subgyrogroup $Z$ is normal.

Moreover, $$\mathrm{gyr}[x,y](Z) = \mathrm{gyr}[x,y](\bigcap_{n \in \omega} V_n) = \bigcap_{n \in \omega}\mathrm{gyr}[x,y](V_n) = \bigcap_{n \in \omega}V_n = Z,$$
hence $Z$ is a strong subgyrogroup of $G$.
It follows from Lemma~\ref{t-6} that $G/Z$ is a topological quotient gyrogroup.

Let $\pi: G \rightarrow G/Z$ be the natural mapping of $G$ onto $G/Z$.
Since $Z$ is normal, it follows that $\pi$ is a gyrogroup homomorphism.
We claim that $G/Z$ is a strongly topological gyrogroup. Clearly, $\pi(W)$ is symmetric for each $W\in\mu$.
Indeed, since $\pi$ is a gyrogroup homomorphism, we conclude that $\pi(W) = \pi(\ominus W) = \ominus (\pi(W))$.
By \cite[Theorem 3.13]{2}, it is easy to see that  the family $\{\pi(W): W \in \mu \}$ is a symmetric neighborhood base of the gyrogroup $G/Z$ at the identity element of $G/Z$.
Now take arbitrary $\pi(a)$, $\pi(b) \in G/Z$ and $W \in \mu$, where $a, b\in G$; then it follows from \cite[Proposition 23]{6} that $$\mathrm{gyr}[\pi(a),\pi(b)](\pi(W)) = \pi(\mathrm{gyr}[a,b](W)) = \pi(W).$$
Thus, $G/Z$ is a strongly topological gyrogroup.

By Lemma~\ref{t-3}, there exists a continuous prenorm $N$ on $G$ which satisfies $$N(\mathrm{gyr}[x,y]z) = N(z)$$ for each $x, y, z \in G$ and
$$\{x \in G: N(x) < 1/2^n \} \subseteq V_n \subseteq \{x \in G: N(x) \leq 2/2^n \}$$
for each integer $n \geq 0$. Next we prove the following 6 claims.

\smallskip
\textbf{Claim 1:} $Z = \{x \in G: N(x) = 0 \}$.

Let $x$ be an arbitrary element of $Z$. Since $Z = \bigcap_{n \in \omega}V_n$ and $V_n \subseteq \{x \in G: N(x) \leq 2/2^n \}$ for each $n \in \omega$, it follows that $N(x) = 0$. Conversely, suppose that $x \in G$ and $N(x) = 0$. Then for each $n \in \omega$, $N(x) < 1/2^n$, hence $x \in \bigcap\{V_n: n \in \omega \} = Z$.

\smallskip
\textbf{Claim 2:} $N(x \oplus z) = N(x) = N(z \oplus x)$ for each $x \in G$ and $z \in Z$.

Indeed, for each $x \in G$ and $z \in Z$, $N(x \oplus z) \leq N(x) \oplus N(z) = N(x) \oplus 0 = N(x)$.
By the definition of $N$, for each $x, y ,z \in G$, we have $N(\mathrm{gyr}[x,y](z)) = N(z)$. Then
\begin{align*}
N(x) &= N(x \oplus (z \oplus (\ominus z)))\\
&= N((x \oplus z) \oplus \mathrm{gyr}[x,z](\ominus z))\\
&\leq N(x \oplus z) \oplus N(\mathrm{gyr}[x,z](\ominus z))\\
&= N(x \oplus z) \oplus N(\ominus z)\\
&= N(x \oplus z)
\end{align*}
Thus, $N(x \oplus z) = N(x)$ for each $x \in G$ and $z \in Z$.

Moreover, $N(z \oplus x) \leq N(z) + N(x) = 0 + N(x) = N(x)$
and
$$N(x) = N(\ominus z \oplus (z \oplus x)) \leq N(\ominus z) + N(z \oplus x) = N(z \oplus x),$$
hence $N(z \oplus x) = N(x)$ for each $x \in G$ and $z \in Z$.

Now define a function $\varrho: G \times G \rightarrow \mathbb{R}$ by
$$\varrho(x,y) = N(\ominus x \oplus y)$$
for any $x,y \in G$.
It is clear that $\varrho$ is continuous.

\smallskip
\textbf{Claim 3.} The function $\varrho$ is a pseudometric on the set $G$.

(I) If $x = y$, then $\varrho(x, y) = N(\ominus x \oplus y) = 0$ for all $x \in G$.

(II)$\varrho(x, y) = N(\ominus x \oplus y) =N(\ominus(\ominus x \oplus y)) = N(\mathrm{gyr}[\ominus x, y](\ominus y \oplus x))=N(\ominus y \oplus x)= \varrho(y, x)$  for all $x, y \in G$.

(III) For any $x, y, z \in G$, we have
\begin{align*}
\varrho(x,z) &=N(\ominus x \oplus z)\\
&=N( \ominus x \oplus (y \oplus (\ominus y \oplus z)))\\
&= N((\ominus x \oplus y) \oplus \mathrm{gyr}[\ominus x, y](\ominus y \oplus z)) \\
&\le N(\ominus x \oplus y) + N(\mathrm{gyr}[\ominus x, y](\ominus y \oplus z))\\
&\le N(\ominus x \oplus y) + N(\ominus y \oplus z)\\
&=\varrho(x, y) + \varrho(y, z).
\end{align*}
Thus $\varrho$ is a pseudometric on $G$.

\smallskip
\textbf{Claim 4.} The pseudometric $\varrho$ is left invariant.

Indeed, for any $x, y, z\in G$, it is obvious that $\varrho(z \oplus x, z \oplus y) = N\big(\ominus(z \oplus x) \oplus (z \oplus y)\big)$. Since $$\ominus(z \oplus x) \oplus (z \oplus y) = \text{gyr}[z, x](\ominus x \oplus y),$$
it follows that $$\varrho(z \oplus x, z \oplus y) = N\Big(\text{gyr}[z, x](\ominus x \oplus y)\Big) = N(\ominus x \oplus y) = \varrho(x, y).$$
Therefore, $\varrho$ is left-invariant.

\smallskip
\textbf{Claim 5.} For each $a, b \in G$, $a_1 \in a \oplus Z$ and $b_1 \in b \oplus Z$, the equation $\varrho(a_1, b_1) = \varrho(a, b)$ holds.

Since $a_1 \in a \oplus Z$, there exists $z_1 \in Z$ such that $a_1 = a \oplus z_1$, then
$$\varrho(a, a_1) = \varrho(a, a \oplus z_1) = N(\ominus a \oplus (a \oplus z_1)) = N(z_1) = 0.$$ Therefore, we have
$$\varrho(a_1, b) \leq \varrho(a_1, a) + \varrho(a, b) = 0 + \varrho(a, b) = \varrho(a, b),$$ and then
$$\varrho(a, b) \leq \varrho(a, a_1) + \varrho(a_1, b) = 0 + \varrho(a_1, b) = \varrho(a_1, b).$$
Thus, $\varrho(a_1, b) = \varrho(a, b)$.

Similarly, we can prove that $\varrho(a_1, b) = \varrho(a_1, b_1)$. Therefore, $\varrho(a_1, b_1) = \varrho(a, b)$.

Next we define a function $d$ on $G/Z \times G/Z$ by $$d(\pi(x), \pi(y)) = \varrho(x, y)$$
for each $x, y \in G$. It is clear that $d$ is continuous.
By Claim 5, the function $d$ is well defined.

\smallskip
\textbf{Claim 6.} The function $d$ is a metric on $G/Z$.

(a) For any $x, y\in G$, it is obvious that $d(\pi(x), \pi(y)) = \varrho(x, y) = N(\ominus x \oplus y) \ge 0$.
If $d(\pi(x), \pi(y)) = 0$, then $\varrho(x, y) = N(\ominus x \oplus y) = 0$, which means $\ominus x \oplus y \in Z$.
Thus $y \in x \oplus Z$, then $\pi(x)=\pi(y)$. Conversely, if $\pi(x) = \pi(y)$, then $d(\pi(x), \pi(y)) = d(\pi(x), \pi(x)) = N(\ominus x \oplus x) = N(0) = 0$.

(b) For any $x, y \in G$, we have
\begin{align*}
d(\pi(x), \pi(y)) &= \varrho(x, y) \\
&= N(\ominus x \oplus y) \\
&= N(\ominus(\ominus x \oplus y)) \quad \text{(symmetry of the prenorm)} \\
&= N(\mathrm{gyr}[\ominus x, y](\ominus y \oplus x)) \\
&= N(\ominus y \oplus x) \quad \text{(since } N(\mathrm{gyr}[a, b]v) = N(v)\text{)} \\
&= \varrho(y, x) \\
&= d(\pi(y), \pi(x)).
\end{align*}

(c) For any $x, y, z \in G$, we have
\begin{align*}
d(\pi(x), \pi(z)) &= \varrho(x, z) \\
&\le \varrho(x, y) + \varrho(y, z) \\
&= d(\pi(x), \pi(y)) + d(\pi(y), \pi(z)).
\end{align*}

Therefore, the function $d$ is a metric on the quotient space $G/Z$. Let $H=G/Z$, and let $E$ be the identity element of $H$.
We also define a function $N_H$ on $H$ by the rule $$N_H(A) = N(a)$$ for each $A \in H$ and $a \in A$. Then
$N_H(\pi(x)) = N(x)$ for each $x \in G$. From Claim 2, the function $N_H$ is well defined.

Since $\pi$ is a gyrogroup homomorphism of $G$ onto $H$ and $\pi(Z)$ is the identity element of the gyrogroup $H$, one easily verifies that $N_H$ is a prenorm on $H$ satisfying the additional condition: If $N_H(A) = 0$, then $A$ is the identity element of $H$.

For $\varepsilon > 0$, put $B(\varepsilon) = \{x \in G: N(x) < \varepsilon \}$ and $O(\varepsilon) = \{X \in H: N_H(X) < \varepsilon \}$;
then it is obvious that $\pi(B(\varepsilon)) = O(\varepsilon)$.

We claim that the prenorm $N$ also satisfies that for each $\varepsilon > 0$ and each $x \in G$, there exists $\delta > 0$ such that $x \oplus (B(\delta) \oplus (\ominus x)) \subseteq B(\varepsilon).$

Indeed, for each $\varepsilon > 0$, there exists $n \in \omega$ such that $2/2^n < \varepsilon$.
If $N(x) < 2/2^n$, then $N(x) < \varepsilon$.
So $V_n \subseteq \{x \in G: N(x) \leq 2/2^n \} \subseteq B(\varepsilon)$.
By (ii), there exists $k \in \omega$ such that $x \oplus (V_k \oplus (\ominus x)) \subseteq V_n$.
Moreover, we have $\{x \in G: N(x) < 1/2^k \} \subseteq V_k$. Put $\delta = 1/2^k$. Then $B(\delta) \subseteq V_k$.
Hence $x \oplus (B(\delta) \oplus (\ominus x)) \subseteq x \oplus (V_k \oplus (\ominus x)) \subseteq V_n \subseteq B(\varepsilon)$.

Moreover, it is easy to verify that the following (d1)-(d3) hold.

\smallskip
(d1) For each $\varepsilon > 0$ and any $X \in H$, there exists $\delta > 0$ such that $X \oplus (O(\delta) \oplus (\ominus X)) \subseteq O(\varepsilon).$

\smallskip
(d2) For each $X \in H$, we have $d(E, X) = d(E, \ominus X)$.

\smallskip
(d3) For each $n\in\mathbb{N}$, we have $O(1/2^{n+1}) \oplus O(1/2^{n+1}) \subseteq O(1/2^n)$.

Now let $\mathscr{T}_H$ be the topology generated by the metric $d$ on $H$. Let us show that $H$ with this topology is a topological gyrogroup. Indeed, it suffices to prove that the family $\mathscr{U}=\{O(1/2^n): n \in \omega \}$ is a base of the space $H$ at the identity element $E$, which satisfies the conditions (1)-(9) of \cite[Theorem 4.4]{9}.

(1) For every $O(1/2^n) \in \mathscr{U}$, it follows from (d3) that $O(1/2^{n+1}) \oplus O(1/2^{n+1}) \subseteq O(1/2^n)$.

(2) For any $O(1/2^n) \in \mathscr{U}$ and $X \in O(1/2^n)$, there exists an element $V = O(1/2^k) \in \mathscr{U}$ for some $k \geq n$  such that $X \oplus V \subseteq O(1/2^n)$.

Indeed, for each $X \in O(1/2^n)$, we have $N_H(X) = N(x) <1/2^n$ for any $x\in X$. Fix any $x\in X$ and put $\delta = 1/2^n - N(x)$. Then there exists $k \geq n$ such that $1/2^k < \delta$. For each $y \in B(1/2^k)$, we have $N(y) < 1/2^k$. Then $N(x \oplus y) \leq N(x) + N(y) < N(x) + 1/2^k < N(x) + \delta = N(x) + 1/2^n - N(x) = 1/2^n$, hence $x \oplus y \in B(1/2^n)$, which shows $x \oplus B(1/2^k) \subseteq B(1/2^n)$. Since $\pi$ is a gyrogroup homomorphism, we conclude that
\[
X \oplus O(1/2^k)=\pi(x) \oplus \pi(B(1/2^k))=\pi(x \oplus B(1/2^k))\subseteq \pi(B(1/2^n))=O(1/2^n).
\]

(3)  For any $O(1/2^n) \in \mathscr{U}$ and $X \in H$, it follows from (d1) that for each $\varepsilon > 0$ and $X \in H$, there exists $\delta> 0$ such that $X \oplus (O(\delta) \oplus (\ominus X)) \subseteq O(\varepsilon).$

(4) For any $n, m\in \omega$, we have $O(1/2^k) \subseteq O(1/2^n) \cap O(1/2^m)$, where $k=\max \{m, n\}$.

Indeed, for each $X \in O(1/2^k)$, we have $N_H(X) < 1/2^k$, then $N_H(X) < 1/2^m$ and $N_H(X) < 1/2^n$, which shows that $X \in O(1/2^n)\cap O(1/2^m)$. Thus $O(1/2^k)\subseteq O(1/2^n) \cap O(1/2^m)$.

(5) For any $O(1/2^n) \in \mathscr{U}$ and $A, C \in H$, we have $\mathrm{gyr}[A, C]O(1/2^n)= O(1/2^n)$.

Indeed, for each $x \in B(1/2^n)$, we have $N(x) < 1/2^n$, then $N(\mathrm{gyr}[a, c]x)=N(x)< 1/2^n$, where $a, c\in G$ and $\pi(a)=A$ and $\pi(c)=C$. Hence $\mathrm{gyr}[a, c]x \in B(1/2^n)$. By the arbitrary choice of $x$, we conclude that $\mathrm{gyr}[a, c]B(1/2^n) \subseteq B(1/2^n)$. Since $\mathrm{gyr}[a, c]$ is a bijective, it follows that $\mathrm{gyr}[a, c]B(1/2^n)=B(1/2^n)$. Now we have
\begin{align*}
\mathrm{gyr}[A, C]O(1/2^n)
&= \mathrm{gyr}[\pi(a), \pi(c)]\pi(B(1/2^n)) \\
&= \pi(\mathrm{gyr}[a, c]B(1/2^n))\\
&= \pi(B(1/2^n))=O(1/2^n).
\end{align*}

(6) For every $O(1/2^n) \in \mathscr{U}$ and $B \in H$, we have $$\bigcup_{W \in O(1/2^n)}\mathrm{gyr}[W, B]O(1/2^n) \subseteq O(1/2^n).$$

Indeed, pick $w\in G$ such that $\pi(w)=W$ for each $W\in O(1/2^n)$, and pick $b\in G$ such that $\pi(b)=B$. Then
\begin{align*}
\bigcup_{W \in O(1/2^n)}\mathrm{gyr}[W, B]O(1/2^n)
&= \bigcup_{W \in V}\mathrm{gyr}[\pi(w), \pi(b)]\pi(B(1/2^n)) \\
&= \bigcup_{W \in V}\pi(\mathrm{gyr}[w, b]B(1/2^n))\\
&= \pi(B(1/2^n))\\
&= O(1/2^n).
\end{align*}

(7) The equation $\{E\} = \bigcap_{n \in \omega}(O(1/2^n) \boxminus O(1/2^n))$ holds.

Indeed, it is obvious that $E = E \boxminus E \in \bigcap_{n \in \omega}(O(1/2^n) \boxminus O(1/2^n))$. Now take any $X \in \bigcap_{n \in \omega}(O(1/2^n) \boxminus O(1/2^n))$, it suffices to show that $X=E.$ Clearly, $X = Y_n \boxminus Z_n$, where $Y_n \in O(1/2^n)$, $Z_n \in O(1/2^n)$ for each $n \in \omega$. Then $N_H(Y_n) < 1/2^n$ and $N_H(Z_n) < 1/2^n$ for each $n \in \omega$. Therefore, for each $n \in\omega$, we have
\begin{align*}
N_H(X) &= N_H(Y_n \boxminus Z_n)\\
&= N_H(Y_n \oplus \mathrm{gyr}[Y_n, Z_n](\ominus Z_n))\\
&\leq N_H(Y_n) + N_H(\mathrm{gyr}[Y_n, Z_n](\ominus Z_n)))\\
&= N_H(Y_n) + N_H(\mathrm{gyr}[\pi(y_n), \pi(z_n)](\ominus \pi(z_n))) \quad \text{(where $\pi(y_n) = Y_n$, $\pi(z_n) = Z_n$)}\\
&= N_H(Y_n) + N_H(\pi(\mathrm{gyr}[y_n, z_n](\ominus z_n)))\\
&= N(y_n) + N(\mathrm{gyr}[y_n, z_n](\ominus z_n))\\
&= N(y_n) + N(\ominus z_n)\\
&= N(y_n) + N(z_n)\\
&\leq 1/2^n + 1/2^n\\
&= 1/2^{n-1}.
\end{align*}
Then $N_H(X) = 0$. By the definition of $N_{H}$, we have $X= E$. Therefore, $\{E\} = \bigcap_{n \in \omega}{(O(1/2^n) \boxminus O(1/2^n))}$.

(8) For every $O(1/2^n) \in \mathscr{U}$ and $X \in H$, there exists $V \in \mathscr{U}$ such that $V \boxplus X \subseteq X \oplus O(1/2^n)$ and $X \oplus V \subseteq X \boxplus O(1/2^n)$.

Take any $x\in G$ such that $\pi(x)=X$.
Clearly, we have $$V_{n+2} \subseteq \{g \in G: N(g) \leq 2/2^{n+2} \} \subseteq \{g \in G: N(g) \leq 1/2^{n} \}.$$
By (v), there exists $k \in \omega$ such that $(\ominus x) \oplus (V_k \boxplus x) \subseteq V_{n+2}$, then we have $\{g \in G: N(g) < 1/2^k \} \subseteq V_k$. Put $\delta = 1/2^k$; then $B(\delta) \subseteq V_k$.
Hence $$(\ominus x) \oplus (B(\delta) \boxplus x) \subseteq (\ominus x) \oplus (V_k \boxplus x) \subseteq V_{n+2}.$$ Therefore,
\begin{align*}
\pi((\ominus x) \oplus (B(\delta) \boxplus x))
&= \ominus \pi(x) \oplus (\pi(B(\delta)) \boxplus \pi(x))\\
&=(\ominus X) \oplus (O(\delta ) \boxplus X) \\
&\subseteq \pi(B(1/2^n))\\
&=O(1/2^n).
\end{align*}
Thus, $O(\delta) \boxplus X \subseteq X \oplus O(1/2^n)$. Now put $V_{1} = O(1/2^k)$. Then we have $V \boxplus X \subseteq X \oplus O(1/2^n)$.

Similarly, we can find $V_{2}=O(1/2^l)$ for some $\in\mathbb{N}$ such that $X \oplus V_{2} \subseteq X \boxplus O(1/2^n)$. Now put $m=\max\{k, l\}$ and $V=O(1/2^m)$. Then we have $V \boxplus X \subseteq X \oplus O(1/2^n)$ and $X \oplus V \subseteq X \boxplus O(1/2^n)$.

(9) For every $U=O(1/2^n) \in \mathscr{U}$, since $O(1/2^n)=\ominus O(1/2^n)$, it follows that  $\ominus U\in \mathscr{U}$.

Therefore we have proved that the family $\{O(1/2^n): n \in \omega \}$, which is a base of the space $H$ at the identity element $E$, satisfies conditions (1)-(9) of \cite[Theorem 4.4]{9}. Thus, $H$ with the topology $\mathscr{T}_H$ is a strongly topological gyrogroup. Since the family $\{O(1/2^n): n \in \omega \}$ is countable, it follows that $H$ with the topology $\mathscr{T}_H$ is metrizable.

Finally, the equation $\pi(B(\varepsilon)) = O(\varepsilon)$ for any $\varepsilon > 0$, which implies that the homomorphism $\pi$ of $G$ onto $H = G/Z$ is continuous at the identity element of $G$. Since $G$ and $H$ are topological gyrogroups, it follows that $\pi$ is continuous. Notice also that if $x \in G$, $X = \pi(x)$, and $\varepsilon > 0$, then $N(x) < \varepsilon$ is equivalent to $N_H(X) < \varepsilon$. Therefore, $\pi^{-1}(O(\varepsilon)) = B(\varepsilon)$ for each $\varepsilon > 0$. In particular, $\pi^{-1}(O(1)) = B(1) \subseteq U$. Now put $\widetilde{V}=O(1)$. The proof is completed.
\end{proof}

\begin{defn}\label{t-15}
let $\mathscr{P}$ be a class of topological gyrogroups, and let $G$ be any topological gyrogroup. Let us say that $G$ is {\color{blue} range-$\mathscr{P}$} if for every open neighborhood $U$ of the identity element $0_G$ of $G$, there exists a continuous homomorphism $\pi$ of $G$ to a gyrogroup $H \in \mathscr{P}$ such that $\pi^{-1}(V) \subset U$, for some open neighborhood $V$ of the identity element $0_H$ of $H$.
\end{defn}

Now, by Theorem~\ref{t-11}, we prove one of main theorems in this section as follows.

\begin{thm}
Each $\sigma$-compact strongly topological gyrogroup $G$ is range-metrizable.
\end{thm}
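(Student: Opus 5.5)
The statement is essentially a direct reformulation of Theorem~\ref{t-11} in the language of Definition~\ref{t-15}. I will take $\mathscr{P}$ to be the class of all metrizable (strongly) topological gyrogroups, so that ``range-metrizable'' means range-$\mathscr{P}$. The plan is then to check the defining condition of Definition~\ref{t-15} for an arbitrary open neighborhood $U$ of $0_G$.

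Fix a $\sigma$-compact strongly topological gyrogroup $G$ and an open neighborhood $U$ of $0_G$. I will apply Theorem~\ref{t-11} to $U$. This gives a continuous homomorphism $\pi\colon G\to H$ onto a metrizable strongly topological gyrogroup $H=G/Z$, together with an open neighborhood $\widetilde{V}=O(1)$ of the identity $e_H$ satisfying $\pi^{-1}(\widetilde{V})\subseteq U$. Since $H\in\mathscr{P}$, and a homomorphism onto $H$ is in particular a homomorphism to $H$, the pair $(\pi,\widetilde{V})$ witnesses the required condition for $U$. As $U$ was arbitrary, $G$ is range-metrizable.

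The only point that needs care is definitional rather than mathematical. I must make sure that the $H$ produced in Theorem~\ref{t-11} is genuinely a topological gyrogroup whose topology is given by the metric $d$. That proof established this: it verified conditions (1)--(9) of \cite[Theorem 4.4]{9} for the countable base $\{O(1/2^n)\}$, and showed that $\pi$ is continuous because $\pi^{-1}(O(\varepsilon))=B(\varepsilon)$.

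If one prefers the stronger reading, with $\mathscr{P}$ the class of second-countable gyrogroups, I would add one step. $H=\pi(G)$ is a continuous image of a $\sigma$-compact space, hence $\sigma$-compact, hence Lindel\"of. A metrizable Lindel\"of space is second-countable, so the same witness works in that case as well. I expect no real obstacle; the substantive work was already done in Theorem~\ref{t-11}.
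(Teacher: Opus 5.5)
Your proposal is correct and matches the paper's argument: the paper presents this theorem as an immediate consequence of Theorem~\ref{t-11}, obtained by unpacking Definition~\ref{t-15} with $\mathscr{P}$ the class of metrizable (strongly) topological gyrogroups. Your extra remark on the second-countable reading is also valid: a continuous image of a $\sigma$-compact space is Lindel\"of, and a metrizable Lindel\"of space is second-countable. It is a more elementary route than the paper's Corollary~\ref{t-14}, which goes through $\omega$-narrowness via Propositions~\ref{t-12} and~\ref{t-13}.
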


The following two propositions are proved in \cite{13}.

\begin{prop}\cite{13}\label{t-12}
If a topological gyrogroup $H$ is a continuous homomorphic image of an $\omega$-narrow topological gyrogroup $G$, then $H$ is also $\omega$-narrow.
\end{prop}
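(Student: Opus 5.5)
The statement is Proposition~\ref{t-12}: a continuous homomorphic image of an $\omega$-narrow topological gyrogroup is $\omega$-narrow. I read ``image'' as $H=\pi(G)$, i.e.\ $\pi$ is onto; otherwise the claim is false, since any $G$ maps trivially into a non-$\omega$-narrow $H$.

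The plan is to push the countable covering sets of $G$ forward along $\pi$. Let $\pi\colon G\to H$ be a continuous surjective gyrogroup homomorphism, and let $V$ be an open neighborhood of the identity $0_H$ of $H$. Because $\pi$ is a homomorphism, $\pi(0_G)\oplus\pi(0_G)=\pi(0_G)$, and left cancellation (Theorem~\ref{t-1.1}(1)) gives $\pi(0_G)=0_H$. By continuity, $W=\pi^{-1}(V)$ is then an open neighborhood of $0_G$ in $G$.

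\textbf{Left $\omega$-narrowness of $H$.} Since $G$ is left $\omega$-narrow, there is a countable $A\subseteq G$ with $G=A\oplus W$. Surjectivity and the homomorphism property give
\[
H=\pi(G)=\pi(A\oplus W)=\pi(A)\oplus\pi(W)\subseteq \pi(A)\oplus V,
\]
where the last step uses $\pi(W)=\pi(\pi^{-1}(V))\subseteq V$. Since $\pi(A)$ is countable, $H=\pi(A)\oplus V$.

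\textbf{Right $\omega$-narrowness of $H$.} This is symmetric: from $G=W\oplus A'$ with $A'$ countable we obtain $H=V\oplus\pi(A')$.

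Nothing here is a genuine obstacle. Associativity is never used, only $\pi(a\oplus w)=\pi(a)\oplus\pi(w)$ applied pointwise. The only points needing care are the surjectivity hypothesis and the verification $\pi(0_G)=0_H$, which is what makes $\pi^{-1}(V)$ a neighborhood of the identity.
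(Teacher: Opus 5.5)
Your argument is correct, and it is the standard push-forward proof: surjectivity and the homomorphism property give $\pi(A\oplus\pi^{-1}(V))=\pi(A)\oplus\pi(\pi^{-1}(V))\subseteq\pi(A)\oplus V$, and symmetrically on the right. The paper does not prove this proposition itself but quotes it from its reference [13], and your checks that $\pi(0_G)=0_H$ and that ``image'' requires $\pi$ to be onto are exactly the points that need care.
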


\begin{prop}\cite{13}\label{t-13}
Let $(G, \tau, \oplus)$ be a left $\omega$-narrow strongly topological gyrogroup with a symmetric open neighborhood base $\mathscr{U}$ at $0$. If $G$ is first-countable, then $G$ has a countable base.
\end{prop}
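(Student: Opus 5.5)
The claim is that a left $\omega$-narrow, first-countable strongly topological gyrogroup $G$ has a countable base. I would build one directly from a countable local base at $0$ and a countable set of translates.

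First, fix a countable base at $0$. Since $G$ is first-countable, choose a countable base $\{W_n : n\in\omega\}$ at $0$. Replacing each $W_n$ by a smaller member of $\mathscr{U}$ contained in it, we may assume $W_n\in\mathscr{U}$. Then each $W_n$ is symmetric and $\mathrm{gyr}[x,y](W_n)=W_n$ for all $x,y\in G$.

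Second, use left $\omega$-narrowness. For each $n$ pick $V_n\in\{W_k\}$ with $V_n\oplus V_n\subseteq W_n$. Left $\omega$-narrowness gives a countable set $A_n\subseteq G$ with $G=A_n\oplus V_n$. The candidate base is
$$\mathscr{B}=\{a\oplus V_n : n\in\omega,\ a\in A_n\},$$
which is countable. Each member is open, since left translation $y\mapsto a\oplus y$ is a homeomorphism with inverse $y\mapsto \ominus a\oplus y$ (Theorem~\ref{t-1.1}(1)).

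Third, show $\mathscr{B}$ is a base. Let $x\in O$ with $O$ open. Left translation is a homeomorphism, so there is $n$ with $x\oplus W_n\subseteq O$. Choose $a\in A_n$ and $v\in V_n$ with $x=a\oplus v$. Then $x\in a\oplus V_n$ by construction, and it remains to show $a\oplus V_n\subseteq x\oplus W_n$.

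Take $y=a\oplus u$ with $u\in V_n$. By Theorem~\ref{t-1.1}(1) and the left gyroassociative law,
$$\ominus x\oplus y=\ominus(a\oplus v)\oplus(a\oplus u)=\mathrm{gyr}[a,v](\ominus v\oplus u).$$
This is the identity already used in Claim~4 of Theorem~\ref{t-11}. Since $V_n$ is symmetric, $\ominus v\oplus u\in V_n\oplus V_n\subseteq W_n$. Since $W_n$ is gyr-invariant, $\ominus x\oplus y\in W_n$, so $y=x\oplus(\ominus x\oplus y)\in x\oplus W_n\subseteq O$. Thus $x\in a\oplus V_n\subseteq O$, and $\mathscr{B}$ is a countable base.

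The main obstacle is the lack of associativity, which is needed to turn $\ominus(a\oplus v)\oplus(a\oplus u)$ into something in $V_n\oplus V_n$. The identity $\ominus(z\oplus x)\oplus(z\oplus y)=\mathrm{gyr}[z,x](\ominus x\oplus y)$ handles this, and it follows from (G3) together with $\ominus(a\oplus b)=\mathrm{gyr}[a,b](\ominus b\ominus a)$. The strong hypothesis is exactly what lets us discard the gyration, because $W_n$ is $\mathrm{gyr}$-invariant. Without it, one would need a compactness-type control such as Lemma~\ref{t-8}, which is unavailable here.
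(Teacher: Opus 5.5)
Your proof is correct and complete. The paper gives no proof of this proposition (it is quoted from \cite{13}), and yours is the standard argument carried over from topological groups: translates $a\oplus V_n$ of a countable local base by the countable sets that left $\omega$-narrowness provides, with the gyration in $\ominus(a\oplus v)\oplus(a\oplus u)=\mathrm{gyr}[a,v](\ominus v\oplus u)$ absorbed by the $\mathrm{gyr}$-invariance of $W_n\in\mathscr{U}$, which is exactly where the strong hypothesis is needed. One small simplification: that identity follows from (G3) applied to $a\oplus\bigl(v\oplus(\ominus v\oplus u)\bigr)=a\oplus u$ followed by left cancellation, so the gyroautomorphic inverse property $\ominus(a\oplus b)=\mathrm{gyr}[a,b](\ominus b\ominus a)$ is not required.
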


\begin{cor}\label{t-14}
Suppose that $G$ is a $\sigma$-compact strongly topological gyrogroup. For every open neighborhood $U$ of $0$, there exists a continuous homomorphism $\pi$ from $G$ onto a second-countable strongly topological gyrogroup $H$ such that $\pi^{-1}(V) \subseteq U$ for some open neighborhood $V$ of the identity element in $H$.
\end{cor}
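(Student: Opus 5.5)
The plan is to combine Theorem~\ref{t-11} with Propositions~\ref{t-12} and~\ref{t-13}. Fix an open neighborhood $U$ of $0$ in $G$. By Theorem~\ref{t-11} there are a continuous homomorphism $\pi$ of $G$ onto a metrizable strongly topological gyrogroup $H$ and an open neighborhood $V$ of $e_H$ with $\pi^{-1}(V)\subseteq U$. It then suffices to show that $H$ is second-countable.

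First I will show that $G$ is $\omega$-narrow. Let $W$ be an open neighborhood of $0$, and write $G=\bigcup_{n\in\omega}F_n$ with each $F_n$ compact. The family $\{x\oplus W: x\in F_n\}$ covers $F_n$, so it has a finite subcover; the union of these finite index sets is a countable $A$ with $G=A\oplus W$. Hence $G$ is left $\omega$-narrow. For right $\omega$-narrowness I need $x\in V'\oplus x$ for a suitable open $V'$. Take $W'$ symmetric with $W'\subseteq W$; then $x\in W'\oplus x$ holds via $0\oplus x=x$. Right translations $y\mapsto y\oplus x$ are homeomorphisms, with inverse $y\mapsto y\boxminus x$ by Theorem~\ref{t-1.1}(4), so $W'\oplus x$ is open. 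Covering each $F_n$ by finitely many sets $W'\oplus x$ then gives a countable $A$ with $G=W\oplus A$. Thus $G$ is $\omega$-narrow.

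Next, by Proposition~\ref{t-12}, $H$ is $\omega$-narrow, being a continuous homomorphic image of $G$. Alternatively, $H=\pi(G)$ is itself $\sigma$-compact, and the previous paragraph applies to it directly. Since $H$ is metrizable, it is first-countable, and it is left $\omega$-narrow and strongly topological. Proposition~\ref{t-13} therefore shows that $H$ has a countable base, and $\pi$ together with $V$ witnesses the corollary.

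The only delicate point is the narrowness step, specifically checking that right translations are open maps in a gyrogroup. This follows from the cooperation identity $(y\boxminus x)\oplus x=y$ in Theorem~\ref{t-1.1}(4), combined with continuity of $\boxminus$. Even this can be avoided, since Proposition~\ref{t-13} requires only left $\omega$-narrowness of $H$, and that follows from the $\sigma$-compactness of $H$ using left translations alone.
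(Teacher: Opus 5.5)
Your proposal is correct and follows the paper's route: Theorem~\ref{t-11} gives the metrizable image $H$, Proposition~\ref{t-12} makes $H$ $\omega$-narrow, and Proposition~\ref{t-13} makes it second-countable. The only difference is that you prove explicitly that a $\sigma$-compact topological gyrogroup is $\omega$-narrow, which is needed to apply Proposition~\ref{t-12} and which the paper uses without comment. Your alternative, noting that $H=\pi(G)$ is itself $\sigma$-compact and therefore left $\omega$-narrow, bypasses Proposition~\ref{t-12} altogether.
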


\begin{proof}
By Theorem~\ref{t-11}, one can find a continuous homomorphism $\pi$ of $G$ onto a metrizable strongly topological gyrogroup $H$ and an open neighborhood $V$ of the identity in $H$ such that $\pi^{-1}(V) \subset U$. From Proposition~\ref{t-12}, it follows that the gyrogroup $H$ is $\omega$-narrow, so Proposition~\ref{t-13} implies that $H$ is second-countable.
\end{proof}

It is well known that each $\omega$-narrow topological group is range-second-countable, see \cite[Corollary 3.4.19]{AT}. Therefore, we have the following question.

\begin{ques}\label{qqqq}
Let $G$ be an $\omega$-narrow strongly topological gyrogroup, is $G$ range-second-countable?
\end{ques}

If we can prove that each right $w$-balanced strongly topological gyrogroup $G$ is range-metrizable, then, by Lemma~\ref{t-2}, we can conclude that each right $w$-narrow strongly topological gyrogroup is range-second-countable, which gives a generalization of Corollary~\ref{t-14}. Hence, we have the following question.

\begin{ques}
Let $G$ be an $w$-balanced strongly topological gyrogroup, is $G$ range-second-countable?
\end{ques}

Clearly, it is easy to see that every subgyrogroup of a range-$\mathscr{P}$ gyrogroup is also range-$\mathscr{P}$.

\begin{prop}\label{t-16}
Let $\mathscr{P}$ be any class of topological gyrogroups which is closed under finite products, and let $H$ be the topological product of a family $\{H_a: a \in A \}$ of gyrogroups in the class $\mathscr{P}$. Then every subgyrogroup of $H$ is range-$\mathscr{P}$.
\end{prop}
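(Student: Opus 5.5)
The plan is to reduce everything to the basic open sets of the product topology. Let $G$ be a subgyrogroup of $H=\prod_{a\in A}H_a$, and let $U$ be an open neighborhood of the identity $0_G$ in $G$. By the definition of the product topology there is a finite set $F=\{a_1,\dots,a_k\}\subseteq A$ and open neighborhoods $W_i$ of the identity $e_{a_i}$ in $H_{a_i}$ such that
\[
G\cap\bigcap_{i=1}^k p_{a_i}^{-1}(W_i)\subseteq U,
\]
where $p_a\colon H\to H_a$ denotes the coordinate projection.

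Set $H_F=\prod_{i=1}^k H_{a_i}$. Since $\mathscr{P}$ is closed under finite products, $H_F\in\mathscr{P}$. Let $p_F\colon H\to H_F$ be the projection onto the coordinates in $F$.

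Because the gyrogroup operation in a product is coordinatewise, $p_F$ is a gyrogroup homomorphism, and it is continuous. Put $\pi=p_F|_G\colon G\to H_F$. Then $\pi$ is a continuous homomorphism, since it is a restriction to a subgyrogroup whose operation is the restricted one. Now let $V=\prod_{i=1}^k W_i$, which is an open neighborhood of the identity of $H_F$. Then
\[
\pi^{-1}(V)=G\cap p_F^{-1}(V)=G\cap\bigcap_{i=1}^k p_{a_i}^{-1}(W_i)\subseteq U.
\]
This verifies the condition of Definition~\ref{t-15}.

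The only point needing care is that Definition~\ref{t-15} asks for a homomorphism to some $H\in\mathscr{P}$ but not onto it, so no surjectivity is required. I should also check that the gyroautomorphisms of the product act coordinatewise, so that $H_F$ is indeed the finite product in the gyrogroup sense. Both are routine, so I expect no real obstacle; the main step is simply choosing the finite support $F$ of the basic neighborhood.
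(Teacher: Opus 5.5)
Your proof is correct and follows the paper's argument: choose a basic neighborhood with finite support $F$, then use the projection onto $\prod_{a\in F}H_a\in\mathscr{P}$ with $V=\prod_{a\in F}W_a$. The only difference is that the paper first shows $H$ itself is range-$\mathscr{P}$ and then cites the heredity to subgyrogroups, which it asserts as clear, while you restrict the projection to $G$ directly and so also prove that heredity step.
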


\begin{proof}
Let $0$ be the identity element of the product gyrogroup $H = \prod_{\alpha \in A} H_\alpha$, and let $U$ be an arbitrary open neighborhood of $0$ in $H$.

By the definition of product topology, there exist a finite subset $F \subseteq A$ and an open neighborhood $V_\alpha$ of the identity element $0_\alpha$ in $H_\alpha$ for each $\alpha \in F$ such that the canonical open set $\left( \prod_{\alpha \in F} V_a \right) \times \left( \prod_{a \notin F} H_a \right) \subseteq U$.

Let $\pi_F : H \to \prod_{\alpha \in F} H_\alpha$ be the projection mapping.
Since each $H_\alpha \in \mathscr{P}$ and the class $\mathscr{P}$ is closed under finite products,
it follows that $\prod_{\alpha \in F} H_\alpha \in \mathscr{P}$.
Moreover, it is obvious that $\pi_F$ is a continuous homomorphism.
Let $V = \prod_{\alpha \in F} V_\alpha$ be an open neighborhood of the identity element in $\prod_{\alpha \in F} H_\alpha$. Then $\pi_F^{-1}(V) \subseteq U$.
Therefore, by definition, the gyrogroup $H$ is range-$\mathscr{P}$.
Since $H$ is range-$\mathscr{P}$, it follows that every subgyrogroup of $H$ is range-$\mathscr{P}$.
\end{proof}

\begin{thm}\label{t-17}
Let $\mathscr{P}$ be a class of (strongly) topological gyrogroups, $\tau$ an infinite cardinal number, and $G$ a (strongly) topological gyrogroup, which is range-$\mathscr{P}$ and has a base $\mathscr{B}$ consisting of open neighborhoods of the identity element of $G$ such that $|\mathscr{B}| \leq \tau$. Then $G$ is topologically isomorphic to a subgyrogroup of the product of a family $\{H_a: a \in A \}$ of groups such that $H_a \in \mathscr{P}$, for each $a \in A$, and $|A| \leq \tau$.
\end{thm}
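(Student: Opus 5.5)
The plan is to imitate the classical diagonal-embedding argument for topological groups (cf.\ \cite[Proposition 3.4.18]{AT}). For each $U \in \mathscr{B}$, the range-$\mathscr{P}$ property of $G$ provides a gyrogroup $H_U \in \mathscr{P}$, a continuous homomorphism $\pi_U: G \to H_U$, and an open neighborhood $V_U$ of the identity of $H_U$ with $\pi_U^{-1}(V_U) \subseteq U$. We take $A = \mathscr{B}$, so that $|A| \leq \tau$, and form the product gyrogroup $\Pi = \prod_{U \in \mathscr{B}} H_U$ with the coordinatewise operation. Its gyroautomorphisms act coordinatewise, so $\Pi$ is a topological gyrogroup. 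In the strongly topological case, the products of basic invariant neighborhoods over finitely many coordinates, with full factors elsewhere, form a gyr-invariant base at the identity, so $\Pi$ is strongly topological as well. Define the diagonal map $\varphi: G \to \Pi$ by $\varphi(x) = (\pi_U(x))_{U \in \mathscr{B}}$.

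The first step is to note that $\varphi$ is a gyrogroup homomorphism, since each $\pi_U$ is one and the operation in $\Pi$ is coordinatewise. It is continuous because every coordinate $\pi_U$ is continuous. Consequently $\varphi(G)$ is a subgyrogroup of $\Pi$: it is closed under $\oplus$ and $\ominus$, and the gyrations of the image satisfy $\mathrm{gyr}[\varphi(a),\varphi(b)]\varphi(c) = \varphi(\mathrm{gyr}[a,b]c)$ by Theorem~\ref{t-1.1}(5).

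The second step is injectivity. Take $x \neq 0$. By Hausdorffness there is $U \in \mathscr{B}$ with $x \notin U$. Then $x \notin \pi_U^{-1}(V_U)$, so $\pi_U(x) \neq 0$ and hence $\varphi(x) \neq 0$. Now suppose $\varphi(x) = \varphi(y)$. Then $\varphi(\ominus x \oplus y) = \ominus\varphi(x) \oplus \varphi(y) = 0$, so $\ominus x \oplus y = 0$, which gives $y = x$ by Theorem~\ref{t-1.1}(1).

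The third step, which I expect to be the main point, is that $\varphi$ is open onto its image $\varphi(G)$. By homogeneity it suffices to check this at the identity. For $U \in \mathscr{B}$, let $W_U = p_U^{-1}(V_U) \cap \varphi(G)$, where $p_U$ is the projection onto $H_U$; this set is open in $\varphi(G)$. We have $\varphi^{-1}(W_U) = \pi_U^{-1}(V_U) \subseteq U$, and since $\varphi$ is a bijection onto its image, $W_U \subseteq \varphi(U)$. So $\varphi(U)$ is a neighborhood of $0$ in $\varphi(G)$ for every basic $U$.

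To transfer this to arbitrary points, fix $x \in G$ and an open set $O \ni x$. We choose $U \in \mathscr{B}$ with $x \oplus U \subseteq O$; this uses that left translations in a topological gyrogroup are homeomorphisms. Then $\varphi(x) \oplus W_U \subseteq \varphi(x \oplus U) \subseteq \varphi(O)$, and $\varphi(x) \oplus W_U$ is open in $\varphi(G)$ because left translation by $\varphi(x)$ is a homeomorphism of the subgyrogroup $\varphi(G)$. Hence $\varphi$ is a topological isomorphism of $G$ onto the subgyrogroup $\varphi(G)$ of $\Pi$. The only remaining subtlety is that $\mathscr{B}$ is a base at $0$, not a base of the whole space, which is exactly why the translation argument is needed.
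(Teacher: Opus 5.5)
Your proposal is correct and takes essentially the paper's route: the diagonal product $\varphi$ of the homomorphisms $\pi_U$ indexed by $U\in\mathscr{B}$, which gives $|A|\le\tau$. The only difference is packaging: the paper shows that $\{\pi_U : U\in\mathscr{B}\}$ separates points from closed sets (reducing to the identity by homogeneity) and then invokes the diagonal embedding criterion, whereas you verify injectivity and openness onto $\varphi(G)$ directly, including the translation step that the paper leaves implicit.
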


\begin{proof}
Fix a base $\mathscr{B}$ consisting of open neighborhoods of the identity element in $G$ such that $|\mathscr{B}| \leq \tau$.
By Theorem~\ref{t-11}, we can choose, for every $U \in \mathscr{B}$, a continuous homomorphism $f_U$ of $G$ onto a (strongly) topological gyrogroup $H_U \in \mathscr{P}$ such that $(f_U)^{-1}(V_{U}) \subseteq U$, for some open neighborhoods $V_{U}$ of identity element in $H$. We claim that the family $\{f_U: U \in \mathscr{B}\}$ separates points and closed sets.

By the homogeneity of $G$, we only need to consider $x=e$ and $e \notin F$, where $F$ is a closed subset of $G$.
Then $G \setminus F$ is an open neighborhood of $e$.
Since $\mathscr{B}$ is a local base of $e$ in $G$, there exists some $U \in \mathscr{B}$ satisfying $e \in U \subseteq G \setminus F$.
Then $f_U^{-1}(V_{U}) \subseteq U \subseteq G \setminus F$, hence $f_U^{-1}(V_{U}) \cap F = \emptyset$, which means $f_U(f_U^{-1}(V_{U})\cap F)= V_{U} \cap f_U(F) = \emptyset$. Therefore, $f_U(e)$ does not belong to the closure of $f_U(F)$.

Now, it is easy to see that the diagonal product $h$ of the family $\{f_U: U \in \mathscr{B}\}$ is a topological isomorphism of $G$ onto a (strongly) topological subgyrogroup of the topological product of the family $\{H_U: U \in \mathscr{B}\}$.
\end{proof}

By Theorems~\ref{t-16} and~\ref{t-17}, we have the second main theorem in this section.

\begin{thm}\label{t-18}
Let $G$ be a strongly topological gyrogroup. Then $G$ is range-metrizable if and only if $G$ is topologically isomorphic to a subgyrogroup of a topological product of metrizable strongly topological gyrogroups.
\end{thm}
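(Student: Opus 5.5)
The plan is to prove the two implications separately, using Proposition~\ref{t-16} for sufficiency and Theorem~\ref{t-17} for necessity. Throughout, let $\mathscr{P}$ denote the class of metrizable strongly topological gyrogroups.

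For sufficiency, suppose $G$ is topologically isomorphic to a subgyrogroup of a product $\prod_{a\in A}H_a$ with each $H_a\in\mathscr{P}$.
\begin{enumerate}
\item First check that $\mathscr{P}$ is closed under finite products. A finite product of metrizable spaces is metrizable. The product of finitely many topological gyrogroups is a topological gyrogroup with coordinatewise operations, and its gyroautomorphisms act coordinatewise.
\item To see that the finite product is strongly topological, take the neighborhood base at the identity consisting of products $\prod_{a\in F}U_a$, where each $U_a$ is a gyr-invariant neighborhood from the base of $H_a$. Each such product is invariant under every $\mathrm{gyr}[x,y]$.
\item Proposition~\ref{t-16} then gives that every subgyrogroup of the product is range-$\mathscr{P}$.
\item Finally, range-$\mathscr{P}$ is preserved by topological isomorphisms: compose the homomorphisms with the isomorphism and pull back neighborhoods. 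Hence $G$ is range-metrizable.
\end{enumerate}

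For necessity, assume $G$ is range-metrizable.
\begin{enumerate}
\item Apply Theorem~\ref{t-17} with $\tau=\max\{\omega,\chi(G)\}$, where $\mathscr{B}$ is any local base at $0$ with $|\mathscr{B}|\le\tau$.
\item In that argument, the homomorphisms $f_U$ should be taken directly from the definition of range-$\mathscr{P}$, not from Theorem~\ref{t-11}. So I will invoke Theorem~\ref{t-17} as stated, with the hypothesis ``range-$\mathscr{P}$''.
\item Theorem~\ref{t-17} yields a topological isomorphism of $G$ onto a subgyrogroup of $\prod_{U\in\mathscr{B}}H_U$, with each $H_U\in\mathscr{P}$. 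This is exactly the required conclusion.
\end{enumerate}

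The main obstacle is making sure the diagonal product $h$ is really a topological isomorphism onto its image. There are three parts to this.
\begin{enumerate}
\item $h$ is a homomorphism because each $f_U$ is one and the product operation is coordinatewise.
\item $h$ is injective because the family $\{f_U\}$ separates points from closed sets and $G$ is Hausdorff (hence $T_1$).
\item $h$ is open onto its image. Since $h$ is a homomorphism between topological gyrogroups, it suffices to check openness at $0$ via translations. Given $U\in\mathscr{B}$, the set $h(G)\cap p_U^{-1}(V_U)$ is open in $h(G)$ and is contained in $h(U)$, because $f_U^{-1}(V_U)\subseteq U$. Then I use that left translations $x\mapsto a\oplus x$ are homeomorphisms in a topological gyrogroup, to move neighborhoods of $0$ to neighborhoods of arbitrary points.
\end{enumerate}
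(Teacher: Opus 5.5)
Your proposal is correct and takes essentially the paper's route: sufficiency from Proposition~\ref{t-16}, with $\mathscr{P}$ the class of metrizable strongly topological gyrogroups, and necessity from Theorem~\ref{t-17}. Your extra checks are exactly the details the paper's one-line proof leaves implicit: closure of $\mathscr{P}$ under finite products, invariance of range-$\mathscr{P}$ under topological isomorphisms, taking the $f_U$ from the definition of range-$\mathscr{P}$ rather than from Theorem~\ref{t-11}, and openness of the diagonal product onto its image.
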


By Theorems~\ref{t-11} and ~\ref{t-18}, we have the following theorem.

\begin{thm}\label{t-19}
If $G$ is a $\sigma$-compact strongly topological gyrogroup, then $G$ is topologically isomorphic to a subgyrogroup of the topological product of some family of second-countable strongly topological gyrogroups.
\end{thm}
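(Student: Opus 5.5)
The plan is to combine Corollary~\ref{t-14} with the embedding argument of Theorem~\ref{t-17}. Take $\mathscr{P}$ to be the class of second-countable strongly topological gyrogroups. By Corollary~\ref{t-14}, for every open neighborhood $U$ of $0$ in $G$ there is a continuous homomorphism $\pi_U$ of $G$ onto some $H_U\in\mathscr{P}$, together with an open neighborhood $V_U$ of the identity of $H_U$, such that $\pi_U^{-1}(V_U)\subseteq U$. So $G$ is range-$\mathscr{P}$.

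Next, fix a local base $\mathscr{B}$ at $0$ in $G$, say the symmetric base $\mu$ witnessing that $G$ is strongly topological. Let $\tau=\max\{|\mathscr{B}|,\omega\}$. Applying Theorem~\ref{t-17} with this $\mathscr{P}$ and $\tau$ gives a topological isomorphism of $G$ onto a subgyrogroup of $\prod_{U\in\mathscr{B}}H_U$. I would check the following points explicitly rather than quote them.

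\begin{enumerate}
\item The diagonal map $h=\Delta_{U\in\mathscr{B}}\pi_U$ is a gyrogroup homomorphism into the product gyrogroup. The operation and the gyrations are defined coordinatewise, and each $\pi_U$ is a homomorphism.
\item $h$ is injective. If $x\neq 0$, use Hausdorffness to choose $U\in\mathscr{B}$ with $x\notin U$. Then $x\notin\pi_U^{-1}(V_U)$, so $\pi_U(x)\neq\pi_U(0)$.
\item $h$ is continuous, since every coordinate is.
\item $h$ is open onto its image. It suffices to check this at $0$ by homogeneity, because left translations are homeomorphisms and $h$ commutes with them. Given $U\in\mathscr{B}$, the set $W=p_U^{-1}(V_U)$, with $p_U$ the coordinate projection, is open in the product. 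It satisfies $h^{-1}(W)=\pi_U^{-1}(V_U)\subseteq U$, so $h(U)\supseteq W\cap h(G)$.
\end{enumerate}

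This is exactly the separation of points from closed sets used in the proof of Theorem~\ref{t-17}.

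Finally, the product of strongly topological gyrogroups is again a strongly topological gyrogroup. A base at the identity is given by the products of basic neighborhoods $\prod_{U\in F}W_U\times\prod_{U\notin F}H_U$ with $F$ finite and $W_U$ gyration-invariant. Such a set is invariant under coordinatewise gyrations, so the image $h(G)$ is the required subgyrogroup.

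The main obstacle is bookkeeping rather than substance. Theorem~\ref{t-17} as written invokes Theorem~\ref{t-11}, where range-$\mathscr{P}$ is what should be used. So I would redo its short argument with Corollary~\ref{t-14} supplying the maps, taking care that the local base used has cardinality $\tau$. Alternatively, one can simply cite Theorem~\ref{t-18} together with Corollary~\ref{t-14}, since second-countable implies metrizable and the class $\mathscr{P}$ is closed under finite products.
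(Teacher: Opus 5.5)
Your argument is correct and follows essentially the paper's route. The paper obtains this theorem from the range property supplied by the quotient construction of Theorem~\ref{t-11} together with the diagonal-product embedding of Theorems~\ref{t-17}/\ref{t-18}. Your use of Corollary~\ref{t-14}, with $\mathscr{P}$ the class of second-countable strongly topological gyrogroups, is exactly what is needed to get second-countable rather than merely metrizable factors, a point the paper's one-line citation of Theorems~\ref{t-11} and~\ref{t-18} glosses over. For the same reason, your closing alternative of citing Theorem~\ref{t-18} verbatim would only give metrizable factors, so keep the direct argument, or equivalently rerun the proof of Theorem~\ref{t-18} with this $\mathscr{P}$.
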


Finally, we give a sufficient condition of $rinv(G) \leq \omega$ for a a strongly topological gyrogroup $G$, which is a complementary to Theorem~\ref{t-2} and Question~\ref{qqqq} respectively.

\begin{thm}\label{t-2000}
Let $G$ be a strongly topological gyrogroup. If $G$ is range-metrizable, then $rinv(G) \leq \omega$.
\end{thm}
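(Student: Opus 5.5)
Fix an open neighbourhood $U$ of $0$ in $G$. We must produce a countable family $\gamma$ of open neighbourhoods of $0$ such that every $x\in G$ admits some $O\in\gamma$ with $x\oplus(O\oplus(\ominus x))\subseteq U$. The plan is to pull back a countable local base from a metrizable quotient.

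First, range-metrizability gives a continuous homomorphism $\pi\colon G\to H$ onto a metrizable topological gyrogroup $H$, together with an open neighbourhood $V$ of $e_H$, such that $\pi^{-1}(V)\subseteq U$. Let $\{W_n:n\in\omega\}$ be a countable local base at $e_H$ in $H$, and set
\[
\gamma=\{\pi^{-1}(W_n):n\in\omega\}.
\]
By continuity of $\pi$ each member of $\gamma$ is an open neighbourhood of $0$, and $\gamma$ is countable.

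Second, fix $x\in G$ and put $X=\pi(x)$. The map $Y\mapsto X\oplus(Y\oplus(\ominus X))$ is continuous on $H$ and sends $e_H$ to $X\oplus(\ominus X)=e_H$. Hence there is $n$ with $X\oplus(W_n\oplus(\ominus X))\subseteq V$. Since $\pi$ is a gyrogroup homomorphism, we have $\pi(\ominus x)=\ominus X$ and
\[
\pi\bigl(x\oplus(w\oplus(\ominus x))\bigr)=X\oplus(\pi(w)\oplus(\ominus X))\in V
\]
for every $w\in\pi^{-1}(W_n)$. It follows that
\[
x\oplus\bigl(\pi^{-1}(W_n)\oplus(\ominus x)\bigr)\subseteq\pi^{-1}(V)\subseteq U,
\]
so $O=\pi^{-1}(W_n)\in\gamma$ works for $x$.

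No step here is a real obstacle. The only points to check are that $\pi$ commutes with $\ominus$ (standard for gyrogroup homomorphisms, as $\pi(0)=e_H$) and that conjugation-type maps are continuous in a topological gyrogroup; both are immediate. The hypothesis that $G$ is strongly topological is not needed beyond the definition of range-metrizability.
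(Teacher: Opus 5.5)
Your proposal is correct and follows essentially the same route as the paper: pull back a countable local base at $e_H$ through the homomorphism, then use that $\pi$ respects the expression $x\oplus(\cdot\oplus(\ominus x))$ to get $x\oplus(\pi^{-1}(W_n)\oplus(\ominus x))\subseteq\pi^{-1}(V)\subseteq U$. Your explicit appeal to the continuity of $Y\mapsto X\oplus(Y\oplus(\ominus X))$ at $e_H$ supplies the justification for choosing $W_n$, which the paper leaves implicit.
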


\begin{proof}
Take any open neighborhood $U$ of the identity element $e_G$ in a range-metrizable gyrogroup $G$. Then there exists a continuous homomorphism $p: G \rightarrow H$ of $G$ onto a metrizable gyrogroup $H$ such that $p^{-1}(V) \subseteq U$ for some open neighborhood $V$ of $e_H$ in $H$.
Let $\mathscr{B}$ be a countable base at $e_H$ in $H$. We claim that the family $\{p^{-1}(U): U\in\mathscr{B}\}$ is right-subordinated to $V$.

Indeed, take any $x\in G$ and put $y=p(x)$.
Then there exists an open neighborhood $O \in \mathscr{B}$ such that $y \oplus (O \oplus (\ominus y)) \subseteq V$. Since $p$ is a homomorphism, it follows that $$p(x \oplus (p^{-1}(O) \oplus (\ominus x)))=p(x) \oplus (p(p^{-1}(O)) \oplus p(\ominus x))=y \oplus (O \oplus (\ominus y))\subseteq V.$$
Therefore, $x \oplus (p^{-1}(O) \oplus (\ominus x)) \subseteq p^{-1}(V) \subseteq U$. Then the family $\{p^{-1}(U): U\in\mathscr{B}\}$ is right-subordinated to $V$.

Moreover, the family $\{p^{-1}(O): O \in \mathscr{B}\}$ is a countable family, thus, $rinv(G) \leq \omega$. The proof is completed.
\end{proof}

\section{Isomorphic embedding of $\sigma$-compact strongly topological gyrogroups}
In this section, we first prove that a strongly topological gyrogroup $H$ is topologically isomorphic to a subgyrogroup of a separable strongly topological gyrogroup if $H$ is $\sigma$-compact and satisfies $\omega$(H) $\leq$ $c$. Then, by applying this result, we establish that some necessary and sufficient conditions for any $\sigma$-compact strongly topological gyrogroup $G$ which is a subgyrogroup of a separable strongly topological gyrogroup in Theorem~\ref{t-23}.

First, we give two technical lemmas and some definitions.

\begin{lemma}\cite{14}\label{t-21}
If $X$ is a separable regular space, then $w(X) \le \mathfrak{c}$. More generally, every regular space $X$ satisfies $w(X) \le 2^{d(X)}$.
\end{lemma}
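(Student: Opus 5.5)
The plan is to use the classical cardinal-function argument (Hajnal--Juhász style) for the first inequality $w(X)\le\mathfrak{c}$, and to obtain it as the special case $d(X)=\omega$ of the general bound $w(X)\le 2^{d(X)}$, since $2^{\omega}=\mathfrak{c}$. So it suffices to prove that every regular space $X$ satisfies $w(X)\le 2^{d(X)}$.

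Fix a dense subset $D\subseteq X$ with $|D|=d(X)$. Consider the family of regular open sets
$$\mathscr{R}=\{\operatorname{int}\overline{U}: U \text{ open in } X\}.$$
The first step is to show that $|\mathscr{R}|\le 2^{|D|}$. For this, observe that for open $U$ we have $\overline{U}=\overline{U\cap D}$, because $D$ is dense and $U$ is open. Hence $\operatorname{int}\overline{U}$ is determined by the subset $U\cap D$ of $D$. Therefore the assignment $S\mapsto \operatorname{int}\overline{S}$, for $S\subseteq D$, maps onto $\mathscr{R}$, which gives $|\mathscr{R}|\le 2^{d(X)}$.

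The second step is to show that $\mathscr{R}$ is a base, and this is the only place where regularity enters. Let $x\in W$ with $W$ open. By regularity there is an open $U$ with $x\in U\subseteq\overline{U}\subseteq W$. Then
$$x\in U\subseteq \operatorname{int}\overline{U}\subseteq \overline{U}\subseteq W,$$
so $\operatorname{int}\overline{U}\in\mathscr{R}$ lies between $x$ and $W$. Consequently $w(X)\le|\mathscr{R}|\le 2^{d(X)}$.

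I expect no real obstacle here; the only subtle point is the identity $\overline{U}=\overline{U\cap D}$ for open $U$. It holds because any neighborhood of a point of $\overline{U}$ meets $U$ in a nonempty open set, and that open set in turn meets $D$. If $X$ is finite (so $d(X)$ is finite) the bound is trivial, so no case split beyond this is needed.
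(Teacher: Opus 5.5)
Your proof is correct: because $\overline{U}=\overline{U\cap D}$ for open $U$, every regular open set has the form $\mathrm{int}\,\overline{S}$ with $S\subseteq D$, and regularity makes these sets a base, so $w(X)\le 2^{d(X)}$, and hence $w(X)\le 2^{\omega}=\mathfrak{c}$ for separable $X$. The paper gives no proof of its own (the lemma is only quoted from the literature), and your argument is the standard textbook proof of this fact; your separate remark about finite $X$ is unnecessary, since the same argument already covers that case.
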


\begin{lemma}\label{t-20}
If a strongly topological gyrogroup $H$ is $\sigma$-compact and satisfies $\omega$(H) $\leq$ $\mathfrak{c}$, then $H$ is topologically isomorphic to a subgyrogroup of a separable strongly topological gyrogroup.
\end{lemma}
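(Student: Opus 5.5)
We follow the classical argument of Leiderman--Morris--Tkachenko and embed $H$ in a power of second-countable strongly topological gyrogroups. Then we invoke Hewitt--Marczewski--Pondiczery: a product of at most $\mathfrak{c}$ separable spaces is separable.

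\emph{Step 1: embedding into a product.} Since $w(H)\le\mathfrak{c}$, $H$ has a local base $\mathscr{B}$ at $0$ with $|\mathscr{B}|\le\mathfrak{c}$. By Corollary~\ref{t-14}, $H$ is range-$\mathscr{P}$, where $\mathscr{P}$ is the class of second-countable strongly topological gyrogroups. So Theorem~\ref{t-17}, applied with $\tau=\mathfrak{c}$, gives a topological isomorphism of $H$ onto a subgyrogroup of $\Pi=\prod_{a\in A}H_a$. Here $|A|\le\mathfrak{c}$ and each $H_a$ is a second-countable strongly topological gyrogroup. Concretely, the embedding is the diagonal product of the homomorphisms $f_U$, $U\in\mathscr{B}$, each onto a second-countable gyrogroup $H_U$.

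\emph{Step 2: $\Pi$ is a separable strongly topological gyrogroup.} The product of gyrogroups with coordinatewise operation is a gyrogroup, with $\mathrm{gyr}[x,y]$ acting coordinatewise. It is a topological gyrogroup because the operations are continuous in each coordinate. It is strongly topological: take as base at $0$ the finite products $\prod_{a\in F}U_a\times\prod_{a\notin F}H_a$ with $U_a$ in the gyr-invariant base of $H_a$. Every coordinatewise gyroautomorphism preserves such a set. Each $H_a$ is second-countable, hence separable, and $|A|\le\mathfrak{c}$. By the Hewitt--Marczewski--Pondiczery theorem, $\Pi$ is therefore separable. Composing with Step 1 proves the lemma.

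\emph{Main obstacle.} I expect the delicate point to be verifying that the diagonal map in Theorem~\ref{t-17} is a topological isomorphism onto its image. This needs the family $\{f_U\}$ to separate points from closed sets at every point, not just at $0$. In the gyrogroup setting I would check this via the left translations $x\mapsto g\oplus x$, which are homeomorphisms, together with $f_U(g\oplus x)=f_U(g)\oplus f_U(x)$. These show that $f_U^{-1}(f_U(g)\oplus V_U)\subseteq g\oplus U$. Injectivity follows because $H$ is Hausdorff and the preimages $f_U^{-1}(V_U)$ shrink to $\{0\}$. Homomorphism and continuity are immediate.

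No alternative proof via the path-connected extension is needed here; that extension enters only later, for statement (c).
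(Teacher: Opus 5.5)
Your proposal is correct and follows essentially the same route as the paper: embed $H$ into a product of at most $\mathfrak{c}$ second-countable strongly topological gyrogroups, then apply the Hewitt--Marczewski--Pondiczery theorem. The only difference is that you derive the bound $|A|\le\mathfrak{c}$ explicitly from Corollary~\ref{t-14} together with Theorem~\ref{t-17} at $\tau=\mathfrak{c}$, and you check that the product is strongly topological; the paper leaves both points implicit when it cites Theorem~\ref{t-19}.
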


\begin{proof}
Assume that an $\sigma$-compact gyrogroup $H$ satisfies $\omega$(H) $\leq$ $\mathfrak{c}$. It follows from Theorem~\ref{t-19} that $H$ is topologically isomorphic to a subgyrogroup of a topological product $\Pi = \prod_{i \in I} G_i$, where the cardinality of $I$ is at most $\mathfrak{c}$ and each $G_i$ is a second countable strongly topological gyrogroup. Then the gyrogroup $\Pi$ is separable by the Hewitt-Marczewski-Pondiczery theorem.
\end{proof}

\begin{defn}\cite{11}\label{t-22}
Let $G$ be a gyrogroup with identity $e$ and let $J = [0,1)$. A function $f: J \to G$ is a {\color{blue} step function} if there are real numbers $a_0, a_1, \dots, a_n$ such that $0 = a_0 < a_1 < \dots < a_n = 1$ and $f$ is constant on $[a_k, a_{k+1})$ for all $k = 0, 1, \dots, n - 1$, where we say $A = \{a_0, a_1, \dots, a_n\}$ is a partition of $J$.
\end{defn}

Denote by $G^\bullet$ the set of all step functions of a gyrogroup $G$. Now we can define an operation ``$\oplus^\bullet$'' on $G^\bullet$ by
$(f \oplus^\bullet g)(r) = f(r) \oplus g(r)$, for each $r \in J$ and all $f, g \in G^\bullet$. Then $(G^\bullet, \oplus^\bullet)$ forms a gyrogroup.

Now we prove main theorem in this section. The proof is similar to \cite[Theorem 3.2]{12}; however, we provide the proof for the convenience for the reader.

\begin{thm}\label{t-23}
The following statements are equivalent for an arbitrary $\sigma$-compact strongly topological gyrogroup $G$.
\begin{itemize}
\item[(a)] $G$ is homeomorphic to a subspace of a separable regular space;
\item[(b)] $G$ is topologically gyrogroup isomorphic to a subgyrogroup of a separable strongly topological gyrogroup;
\item[(c)] $G$ is topologically gyrogroup isomorphic to a closed subgyrogroup of a separable path-connected, locally path-connected strongly topological gyrogoup.
\end{itemize}
\end{thm}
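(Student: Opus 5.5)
We prove the cycle (a)$\Rightarrow$(b)$\Rightarrow$(c)$\Rightarrow$(a).

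\emph{(c)$\Rightarrow$(a).} This is immediate. A strongly topological gyrogroup is a topological gyrogroup, hence regular (indeed Tychonoff, as in the group case). So the ambient separable gyrogroup in (c) is a separable regular space containing a homeomorphic copy of $G$.

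\emph{(a)$\Rightarrow$(b).} Suppose $G$ embeds in a separable regular space $X$. By Lemma~\ref{t-21}, $w(X)\le\mathfrak{c}$, and weight is monotone on subspaces, so $w(G)\le\mathfrak{c}$. Since $G$ is a $\sigma$-compact strongly topological gyrogroup, Lemma~\ref{t-20} applies and yields (b). For completeness, Lemma~\ref{t-20} works as follows. By Theorem~\ref{t-19} together with Theorem~\ref{t-17} applied with $\tau=\mathfrak{c}$ (a local base at $0$ of size $\le\mathfrak{c}$ exists), $G$ embeds as a subgyrogroup of a product of at most $\mathfrak{c}$ second-countable strongly topological gyrogroups. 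Such a product is separable by Hewitt--Marczewski--Pondiczery, and it is a strongly topological gyrogroup: the neighborhood base at $0$ is given by finite products of invariant basic sets, and gyrations act coordinatewise.

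\emph{(b)$\Rightarrow$(c).} Let $G\le S$ with $S$ a separable strongly topological gyrogroup. Following \cite[Theorem 3.2]{12} and \cite{11}, form the step-function gyrogroup $S^\bullet$ with the Hartman--Mycielski-type topology. Its basic neighborhoods of the identity are
\[
O(V,\varepsilon)=\{f\in S^\bullet:\ \lambda(\{r\in J: f(r)\notin V\})<\varepsilon\},
\]
where $V$ ranges over the invariant base $\mu$ of $S$ and $\lambda$ is Lebesgue measure. The plan has four parts.
\begin{enumerate}
\item By \cite{11}, $S^\bullet$ is a path-connected, locally path-connected topological gyrogroup. Since gyrations act pointwise, we have $\mathrm{gyr}[f,g](O(V,\varepsilon))=O(V,\varepsilon)$ whenever $\mathrm{gyr}[f(r),g(r)](V)=V$ for all $r$, so $S^\bullet$ is strongly topological.
\item The constant functions form a closed subgyrogroup of $S^\bullet$ topologically isomorphic to $S$, again by \cite{11}.
\item $S^\bullet$ is separable. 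The dense set consists of step functions with rational breakpoints taking values in a countable dense subset $D$ of $S$. To check density, approximate each value of a given step function by an element of $D$ inside a translate of $V$, and approximate breakpoints so that the discrepancy has measure less than $\varepsilon$.
\item $G$ must be closed in the final ambient gyrogroup. Since $G$ need not be closed in $S$, first replace $S$ by the closure $\overline{G}$. This is a subgyrogroup and, as a subset of $S$, still separable: this holds because $S$ is a separable topological gyrogroup and hence second countable-like in density for subspaces of this kind, but in general density need not pass to subspaces, so this needs care. It is safer to keep $S$ and argue directly that $G$, identified with the constant functions $\{c_g : g\in G\}$, is closed in $S^\bullet$.
\end{enumerate}

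For item 4 one cannot argue through the closure of $G$ in $S$, so I would instead use $\sigma$-compactness. The route is that $\sigma$-compact plus $w(G)\le\mathfrak{c}$ puts $G$ in a product of $\le\mathfrak{c}$ second-countable strongly topological gyrogroups. In such a product, $\overline{G}$ is a closed subgyrogroup of a separable gyrogroup, so $\overline{G}$ is separable if we instead pass to $\Pi^\bullet$. Concretely, apply the construction to $\Pi=\prod_{i\in I}G_i$ (from (b) via the weight bound $w(G)\le w(S)\le\mathfrak{c}$ and Lemma~\ref{t-20}, so we really work in $\Pi$).

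Closedness is the main obstacle. $G$ is $\sigma$-compact, so each compact piece is closed in $\Pi$, but $G$ itself may not be. The plan, following \cite{12}, is to embed $\Pi$ into a path-connected, locally path-connected separable gyrogroup $\Pi^\bullet$ and $G$ into it in such a way that closedness is preserved. I would first try to show that the closure of $G$ in $\Pi$ is a separable strongly topological gyrogroup, using that $\Pi$ has countable cellularity-type behavior. If instead the closure of $G$ in $\Pi$ is used as the new $S$, then $S$ is closed in $S^\bullet$ as the constants, and $G$ is closed in $S^\bullet$ exactly when $G$ is closed in $S$. So it suffices to know that the Raikov completion or the closure of $G$ is separable. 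I would try to get this from the fact that $\Pi$ is a separable product with $\mathfrak{c}$ second-countable factors: the closure of $G$ can be re-embedded into a product of the completions of the factor images, which remain second countable. If that fails, I would fall back on the argument in \cite[Theorem 3.2]{12} verbatim, checking that each step uses only properties preserved by strongly topological gyrogroups.
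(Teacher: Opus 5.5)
Your (a)$\Rightarrow$(b) via Lemma~\ref{t-21} and Lemma~\ref{t-20}, your (c)$\Rightarrow$(a), and items 1--3 about $S^\bullet$ agree with the paper. The gap is item 4, which is the real content of (c): you never produce a separable, path-connected, locally path-connected gyrogroup in which $G$ is \emph{closed}. The constant embedding $i\colon S\to S^\bullet$ makes $i(G)$ closed only when $G$ is already closed in $S$, and your proposed repairs do not work.

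There is no reason for $\overline{G}$ (in $S$ or in $\Pi$) to be separable. Closed subgroups of separable groups can be non-separable even in the $\sigma$-compact case. For example, $\bigoplus_{\mathfrak c}\mathbb{Z}_2\subseteq\mathbb{Z}_2^{\mathfrak c}$ is $\sigma$-compact, being generated by the compact set $\{0\}\cup\{\delta_\alpha:\alpha<\mathfrak c\}$ of unit vectors. It is non-separable, precompact, and of weight $\mathfrak c$, so by Theorem~\ref{t-3333} it is a closed subgroup of a separable group. Re-embedding $\overline{G}$ into a product of completions only relocates the same problem (a closed subset of a separable product). ``Falling back on \cite{12} verbatim'' names a source rather than giving an argument.

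The missing idea is to shrink $S^\bullet$ rather than $S$. Let $E\subseteq S^\bullet$ consist of the step functions that equal $e$ on some terminal interval $[b,1)$, and let $G_0$ be the subgyrogroup generated by $i(G)\cup E$. This $G_0$ has the three properties (c) needs.

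\emph{Separability.} $E$ already contains the countable set of $D$-valued step functions with rational breakpoints that are $e$ near $1$. This set is dense in $S^\bullet$, so $G_0$ is separable.

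\emph{Closedness.} Every $f\in G_0$ is a word $W(g_1^\bullet,\dots,g_{k+1}^\bullet,t_1,\dots,t_k)$ with $g_j\in G$ and $t_i\in E$. If $f$ is constant, evaluate it at a point $b$ beyond all terminal breakpoints of the $t_i$; this gives $f(b)=W(g_1,\dots,g_{k+1},e,\dots,e)\in G$. Hence $G_0\cap i(S)=i(G)$, which is closed in $G_0$ because $i(S)$ is closed in $S^\bullet$.

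\emph{Path-connectedness.} Use the shrinking paths $r\mapsto f_r$, which keep the value of $f$ on the initial fraction $r$ of each constancy interval and put $e$ elsewhere. For $r<1$ these lie in $E$, and at $r=1$ they reach the generator, so they stay in $G_0$. Substituting them into $W$ gives paths $\Phi$ with $\Phi(r,x)\in\{f(x),e\}$ for all $r,x$, so they remain inside $G_0\cap O(U,\varepsilon)$. This yields path-connectedness and local path-connectedness of $G_0$ itself, not merely of $S^\bullet$.

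Without a dense subgyrogroup of this kind that contains $G$ as a closed subset, your argument does not reach (c).
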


\begin{proof}
Since each $T_1$ topological gyrogroup is regular, it is easy to see that ($c$) implies ($b$) and ($b$) implies ($a$). Hence it suffices to show that ($a$) implies ($c$).

Assume that a $\sigma$-compact strongly topological gyrogroup $G$ is homeomorphic to a subspace of a separable regular space $X$. Then $\omega(X) \leq \mathfrak{c}$ by Lemma~\ref{t-21}, hence $\omega(G) \leq \omega(X) \leq \mathfrak{c}$.
Applying Lemma~\ref{t-20}, we can find a separable strongly topological gyrogroup $H$ containing $G$ as a topological subgyrogroup. Clearly, $G$ can fail to be closed in $H$, so our next step is to construct another separable strongly topological gyrogroup containing $G$ as a closed subgyrogroup.
 Consider the set $H^\bullet$ of all functions $f$ on $J = [0,1)$ with values in $H$ such that, for some sequence $0 = a_0 < a_1 < \dots < a_n = 1$, the function $f$ is constant on $[a_k, a_{k+1})$ for each $k = 0, \dots, n-1$. Then $(H^\bullet, \oplus^\bullet)$ forms a gyrogroup after Definition~\ref{t-22}. The elements of $H^\bullet$ are called {\it step functions}.

The gyrogroup $H$ is gyrogroup isomorphic to a closed subgyrogroup of $H^\bullet$. The corresponding gyrogroup monomorphism $i: H \rightarrow H^\bullet$ assigns to each $h \in H$ the constant function $i(h)=h^\bullet \in H^\bullet$ defined by $h^\bullet(x)=h$ for all $x \in J$. Consequently, $i(H)$ forms a subgyrogroup of $H^\bullet$ that is isomorphic to $H$ as a gyrogroup.

Let $e$ be the identity of $H$, and denote by $E$ the set of all step functions $f$ from $J$ to $H$ satisfying the following condition:
\begin{itemize}
\item[(i)] there exists $b \in [0,1)$ such that $f(x) = e$ for each $x$ with $b \leq x < 1$.
\end{itemize}
It is clear that $E$ is a subgyrogroup of $H^\bullet$. Let $D=\{g_{k}: k\in\omega\}$ be a countable dense subgyrogroup of $H$.
Denote by $E'$ the subgyrogroup of $E$ consisting of all $f \in E$ satisfying the following condition:



\begin{itemize}
\item[(ii)] there exists rational numbers $0 = b_0 < b_1 < \dots < b_{m-1} < b_m =1$ such that $f$ is constant on each subinterval $[b_k, b_{k+1})$ and $f(b_k) = g_k \in D$ for $k = 0, 1, \dots, m-1$.
\end{itemize}
Notice that $E'$ is countable.
The argument given in the proof of \cite[Theorem 3.11]{8} shows that $E'$ is dense in $H^\bullet$.

Denote by $G_0$ the subgyrogroup of $H^\bullet$ generated by the set $i(G) \cup E$.
Since $i: H \rightarrow H^\bullet$ is a topological gyrogroup monomorphism, the gyrogroup $G$ is topologically isomorphic to the subgyrogroup $i(G)$ of $H^\bullet$.
From $E' \subseteq E \subseteq G_0$, it follows that the gyrogroup $G_0$ is separable. 
Let us verify that $i(G)$ is closed in $G_0$.

First we note that $i(H)$ is closed in $H^\bullet$ according to \cite[Theorem 3.3]{8}.
Hence the required conclusion about $i(G)$ will follow if we show that $G_0 \cap i(H) = i(G)$.
Assume that $f \in G_0 \cap i(H)$.
Then $f$ is a constant function on $J$ with a single value $h_0 \in H$.
We have to show that $h_0 \in G$, that is, $f \in i(G)$.
As $f \in G_0$, the function $f$ must necessarily be composed of a finite number of elements $\{g_1^\bullet, g_2^\bullet, \dots, g_{k+1}^\bullet \}$ from $i(G)$ and a finite number of elements $\{t_1, t_2, \dots, t_k\}$ from $E$, combined through a finite number of gyroadditions ``$\oplus$'' and inversions ``$\ominus$'' (with a specific parenthesization), we can write $f$ in the form $$f = W(g_1^\bullet, g_2^\bullet, \dots, g_{k+1}^\bullet, t_1, \dots, t_k).$$
Item (i) of our definition of the gyrogroup $E$ implies that there exists $b < 1$ such that $t_i(b) = e$ for each $i = 1, \dots, k$.
Hence $h_0 = f(b) = W(g_1^\bullet(b), g_2^\bullet(b), \dots, g_{k+1}^\bullet(b)) = W(g_1, \dots, g_k, g_{k+1}) \in G$.
Since $f$ is a constant function, we see that $f \in i(G)$.
This implies the inclusion $G_0 \cap i(H) \subseteq i(G)$. The inverse inclusion is obvious. Therefore, $G \cong i(G)$ is closed in $G_0$.

Now we have to check that the gyrogroup $G_0$ is path-connected and locally path-connected. It is worth mentioning that $G_0$ is a proper dense subgyrogroup of $H^\bullet$, 
but not every dense subgyrogroup of $H^\bullet$ inherits these properties from $H^\bullet$. We start with the path-connectedness of $G_0$.

Since $G_0$ is generated by the set $i(G) \cup E$, it suffices to verify that, for every element $f\in i(G) \cup E$, there exists a path in $G_0$ connecting the identity $e^\bullet$ of $H^\bullet$ with $f$.
Indeed, every element $f \in G_0$ is a product of finitely many elements $f_1, \dots, f_n$ of $i(G) \cup E$ and, multiplying the paths connecting $f_1, \dots, f_n$ with $e^\bullet$, we obtain a path in $G_0$ connecting $e^\bullet$ and $f$. Now take an arbitrary element $f \in i(G) \cup E$.

{\bf Case 1:} $f \in i(G)$.

Then $f = g^\bullet$ for some $g \in G$.
For every $r \in [0,1]$, let $f_r$ be a step function from $J$ to $H$ defined by $f_r(x) = g$ if $x < r$ and $f_r(x) = e$ if $r \leq x < 1$.
It is clear that $f_r \in H^\bullet$ for each $r \in [0,1]$. We claim that the mapping $\varphi: [0,1] \rightarrow H^\bullet$, $\varphi(r) = f_r$ ($r\in [0, 1]$), is continuous.
Indeed, it suffices to prove that for any open neighborhood of $f_{r_0} \oplus^\bullet O(V, \varepsilon)$ of $f_{r_0}$ in $H^\bullet$, there exists a $\delta > 0$ such that $|r-r_0| < \delta$ implies $\varphi(r) = f_r \in f_{r_0} \oplus^\bullet O(V, \varepsilon)$, where $V$ is an open neighborhood of the identity element in $H$ and $\varepsilon>0$.
Equivalently, we need to demonstrate that $\ominus^\bullet f_{r_0} \oplus^\bullet f_r \in O(V, \varepsilon)$.
From the definitions of $f_{r_0}, f_r$, we have $\mu(\{x \in J: \ominus^\bullet f_{r_0} (x)\oplus^\bullet f_r(x) \notin V \}) \leq |r_0 - r|$.
Then we can choose $\delta = \varepsilon$.
Whenever $|r - r_0| \leq \delta = \varepsilon$, then $\varphi(r) = f_r \in f_{r_0} \oplus^\bullet O(V, \varepsilon)$.
Therefore, $\varphi$ is continuous. Since $\varphi(0) = e^\bullet$, $\varphi(1) = f \in i(G)$, and $f_r \in E \subseteq G_0$ for each $r \in [0,1)$,
this proves that $\varphi$ is a path in $G_0$ connecting $e^\bullet$ and $f$.

{\bf Case 2:} $f \in E$.

Choose a partition $0 = b_0 < b_1 < \dots < b_m = 1$ of $J$ such that $f$ is constant on each interval $[b_i, b_{i+1})$, where $0 \leq i < m$, and $f(b_{m-1}) = e$.
Let us define a path $\varphi: [0,1] \rightarrow G_0$ connecting $e^\bullet$ with $f$ as follows.
First we put $l_k = b_{k+1} - b_k$ for $k = 0, \dots, m-1$. For every $r \in [0,1]$ and every $x \in J$, let
$$f_r(x) =
\begin{cases}
f(x), & \text{if } b_k \le x < b_k + r \cdot l_k \text{ for some } k \text{ with } 0 \le k < m; \\
e, & \text{if } b_k + r \cdot l_k \le x < b_{k+1} \text{ for some } k \text{ with } 0 \le k < m.
\end{cases}
$$
It is easy to verify that $f_0 = e^\bullet$, $f_1 = f$, and $f_r(x) = e$ if $b_{m-1} \leq x < 1$. Hence $f_r \in E \subseteq G_0$ for each $r \in [0,1]$.
Again, the mapping $\varphi: [0,1] \rightarrow H^\bullet$ defined by $\varphi(r) = f_r$ for each $r \in [0,1]$ is continuous, so $\varphi$ is a path in $G_0$ connecting $e^\bullet$ and $f$.
To prove the mapping $\varphi$ is continuous, it suffices to prove that for any open neighbourhood $f_{r_0} \oplus^\bullet O(V, \varepsilon)$ of $f_{r_0}$ in $H^\bullet$, there exists a $\delta > 0$ such that $|r-r_0| < \delta$ implies $\varphi(r) = f_r \in f_{r_0} \oplus^\bullet O(V, \varepsilon)$, where $V$ is an open neighborhood of the identity element in $H$ and $\varepsilon>0$.
Equivalently, we need to demonstrate that $\ominus^\bullet f_{r_0} \oplus^\bullet f_r \in O(V, \varepsilon)$.
From the definitions of $f_{r_0}, f_r$, we have $$\mu(\{x \in J: \ominus^\bullet f_{r_0}(x) \oplus^\bullet f_r(x) \notin V \}) \leq \sum_{k=0}^{m-1}(|r_0 - r| \cdot l_k) = |r_0 - r|\sum_{k=0}^{m-1}l_k.$$ Since $\sum_{k=0}^{m-1}l_k = 1,$ it follows that $\mu(\{x \in J: \ominus^\bullet f_{r_0}(x) \oplus^\bullet f_r(x) \notin V \}) \leq |r_0 - r|$.
Then we can choose $\delta = \varepsilon$.
Whenever $|r - r_0| \leq \delta = \varepsilon$, we have $\varphi(r) = f_r \in f_{r_0} \oplus^\bullet O(V, \varepsilon)$.
Therefore, $\varphi$ is continuous.

Summing up, the gyrogroup $G_0$ is path-connected.

Finally, we check that $G_0$ is locally path-connected.
Every neighborhood of $e^\bullet$ in $H^\bullet$ contains an open neighborhood of the form $$O(U, \varepsilon) = \{f \in H^\bullet: \mu(\{x \in J: f(x) \notin U\}) < \varepsilon \},$$ where $U$ is an open neighborhood of the identity $e$ in $H$ and $\varepsilon > 0$. Therefore, by the homogeneity of $G_0$, it suffices to verify that each intersection $G_0 \cap O(U,\varepsilon)$ is path-connected. Now take an arbitrary element $f \in G_0 \cap O(U,\varepsilon)$.
Then $f = W(i(g_1), \dots, i(g_k), i(g_{k+1}), t_1, \dots, t_k)$, where $g_1, \dots, g_k, g_{k+1} \in G$, $t_1, \dots, t_k \in E$.
Our aim is to define a path $\Phi:[0,1] \rightarrow G_0 \cap O(U,\varepsilon)$ connecting $e^\bullet$ with $f$.

First we choose a partition $0 = b_0 < b_1 < \dots < b_{m-1} < b_m =1$ of $J$ such that $t_i$ is constant on $[b_j,b_{j+1})$ for all integers $i \leq k$ and $j < m$.
Let also $l_j = b_{j+1} - b_j$, where $j = 0, \dots, m-1$. For every $i =1, \dots, k, k+1$ we define a path $\varphi_{i} : [0,1] \rightarrow H^\bullet$ by
$$\varphi_i(r,x) =
\begin{cases}
g_i, & \text{if } b_j \le x <b_j + r \cdot l_j \text{ for some } j \text{ with } 0 \le j < m; \\
e, & \text{if } b_j + r \cdot l_j \le x < b_{j+1} \text{ for some } j \text{ with }  0 \le j < m. \\
\end{cases}
$$
Then $\varphi_i(0,x) = e$, $\varphi_i(1,x) =g_i$ for each $x \in J$, and $\varphi_i(r,\cdot) \in G_0$ for each $r \in [0,1]$.
The path $\varphi_i$ is continuous and connects $e^\bullet$ with $g_i^\bullet$ in $G_0$.

Similarly, we define a path $\psi_i: [0,1] \rightarrow H^\bullet$ for each $i = 1, \dots, k$ by
$$\psi_i(r, x) =
\begin{cases}
t_i(x), & \text{ if } b_j \leq x < b_j + r \cdot l_j \text{ for some } j \text{ with } 0 \leq j <m ;\\
e, & \text{ if } b_j +r \cdot l_j \leq x <b_{j+i} \text{ for some } j \text{ with } 0 \leq j < m.\\
\end{cases}
$$
It is clear that $\psi_i(0, x) = e$, $\psi_i(1, x) = t_i(x)$ for each $x \in J$ and $\psi_i(r, \cdot) \in G_0$ for each $r \in [0, 1]$. The path $\psi_i$ is continuous and connects $e^\bullet$ with $t_i$ in $G_0$.


Finally, we define a path $\Phi$ in $G_0$ connecting $e^\bullet$ with $f$ by letting
$$\Phi(r,x) = W(\varphi_1(r,x), \dots, \varphi_k(r,x), \varphi_{k+1}(r,x), \psi_1(r,x), \dots, \psi_k(r,x)),$$
where $r \in [0,1]$ and $x \in J$.
The path $\Phi$ is continuous being a product of continuous pathes $\varphi_i$ ($i\leq k+1$) and $\psi_j$ ($j\leq k$).
The following Claim describes a basic property of the path $\Phi$:

{\bf Claim:} For all $r \in [0,1]$ and $x \in J$, either $\Phi(r,x) = f(x)$ or $\Phi(r,x) = e$.

Indeed, let $r \in [0,1]$ and $x \in J$ be arbitrary. Choose an integer $j < m$ such that $b_j \leq x < b_{j+1}$.
If $b_j \leq x < b_j +rl_j$, then $\varphi_i(r,x) = g_i$ and $\psi_i(r,x) = t_i(x)$ for all $i$, whence it follows that $\Phi(r,x) = f(x)$.
If $b_j + r \cdot l_j \leq x <b_{j+1}$, then $\varphi_i(r,x) = e$ and $\psi_i(r,x) = e$ for all $i$, so $\Phi(r,x) = e$. This proves our Claim.

Applying Claim we see that
$$\{x \in J: \Phi(r,x) \notin U \} \subseteq \{x \in J: f(x) \notin U \},$$
for every $r \in[0,1]$.
Hence $\mu(\{x \in J: \Phi(r,x) \notin U \}) < \varepsilon$ for each $r \in [0,1]$.
In other words, the path $\Phi$ lies in $O(U, \varepsilon)$, so the set $O(U, \varepsilon)$ is path-connected.
Since the sets of the form $G_0 \cap O(U, \varepsilon)$ constitute a base for $G_0$ at the identity, this completes the proof of this theorem.
\end{proof}

\end{document}